\tikzset{
commutative diagrams/.cd,
arrow style=tikz,
diagrams={>=angle 60}}
\newcommand{\overbar}[1]{\mkern 1.5mu\overline{\mkern-1.5mu#1\mkern-1.5mu}\mkern 1.5mu}
\newcommand{\nat}{\mathbb{N}}
\newcommand{\real}{\mathbb{R}}
\newcommand{\upitem}[1]{\item[\textup{({\em #1}\/)}]}
\newcommand{\ie}{{i.e.}}                                   
\newcommand{\viz}{{viz.}}
\newcommand{\cf}{{cf.\ }}
\newcommand{\etal}{{et al.}}
\newcommand{\prob}{\mathbb{P}}
\newcommand{\expec}{\mathbb{E}}
\newcommand{\ident}{I}
\newcommand{\indica}{\mathbbm{1}}
\newcommand{\calf}{\mathcal{F}}
\newcommand{\dom}{{\mathrm{dom}}}
\newcommand{\ran}{{\mathrm{ran}}}
\newcommand{\parfun}{\rightharpoonup}
\newcommand{\olf}{{\overbar{F}}}
\newcommand{\olg}{{\overbar{G}}}
\newcommand{\bfa}{{\mathbf A}}
\newcommand{\bfb}{{\mathbf B}}
\newcommand{\bfp}{{\mathbf P}}
\newcommand{\bfu}{{\mathbf u}}
\newcommand{\bfv}{{\mathbf v}}
\newcommand{\bfw}{{\mathbf w}}
\newcommand{\bfx}{{\mathbf x}}
\newcommand{\bfy}{{\mathbf y}}
\newcommand{\bfz}{{\mathbf z}}
\newcommand{\compose}{{\,\circ\,}}
\newcommand{\Section}[1]{\section{\hskip -1em.~~#1}}
\numberwithin{equation}{section}
\theoremstyle{plain}
\newtheorem{theorem}{Theorem}[section]
\newtheorem{corollary}[theorem]{Corollary}
\newtheorem{lemma}[theorem]{Lemma}
\newtheorem{proposition}[theorem]{Proposition}
\theoremstyle{definition}
\newtheorem*{definition}{Definition}
\theoremstyle{remark}
\newtheorem*{remark}{Remark}
\begin{document}

\title{Conditional Expectation Bounds with\\ Applications in Cryptography}

\author{
Kevin J. Compton\\
Computer Science and Engineering Division\\
University of Michigan - Ann Arbor\\
Ann Arbor, MI 48109-2212, USA\\
kjc@umich.edu}

\maketitle

\begin{abstract}
We  derive two conditional expectation bounds, which we use to simplify cryptographic security proofs. The first bound relates the expectation of a bounded random variable 
and  the  average of its conditional expectations  with
respect to a set of i.i.d.\ random objects.  It shows, under certain conditions, 
that  the conditional expectation average has a 
small  tail probability  when the  expectation of the random variable is sufficiently large. 
 It is used to simplify the proof  that the existence of weakly one-way functions implies the existence of strongly one-way functions.  The second bound relaxes the independence requirement on the random objects to give a result  that has applications to 
expander graph constructions in cryptography.  It is used to simplify the proof that there
is a security preserving reduction from weakly one-way functions to strongly one-way functions.  
To satisfy the hypothesis for this bound, we prove a hitting property for directed graphs that
are expander-permutation hybrids.
\end{abstract}

\Section{Introduction}

Let $Z$ be a  random variable on a probability space $(\Omega,\calf,\prob)$ with $0\leq Z\leq 1$ and $U_i$,
$0\leq i<t$, be {i.i.d.} random objects from the measurable space
$(\Omega,\calf)$ to the measurable space  $(\Omega',\calf')$.  Let $W$ be
the average of the conditional expectations
$W_i=\expec(Z|U_i)$.  From the law of iterated expectations and linearity of expectation, we
know that $\expec(W)=\expec(Z)$, but we would expect $W$ to be more concentrated around the mean.  Since it is not an average of {\em independent} random variables, however, this concentration is difficult to quantify.  We will derive a tail bound for $W$ which says, roughly, that  $W$ has a 
small  tail probability  when  $\expec(Z)$ is sufficiently large.  The contrapositive, 
that $\expec(Z)$ is small when $W$ has a large tail probability,
may be applied to simplify certain cryptographic security proofs, viz.,
the proof that the existence of weakly one-way functions implies the existence
of strongly one-way functions.  We will relax the independence requirement on the random objects
$U_i$ to obtain a somewhat weaker inequality that has applications to 
cryptographic expander graph constructions such as the one used to show that there
is a security preserving reduction from weakly one-way functions to strongly one-way functions.

We will assume the reader is familiar with terminology and notation from probability theory  found in standard texts, e.g.,  \cite{ash;doleans:00,Halmos:1974,loeve:1978}. Before proceeding,
we should point out that there are two definitions of  conditional expectation   in the literature. In most cases it does not matter which is used, but it does here.  Let $Y$ be a random variable on a probability space
$(\Omega,\calf,\prob)$ and $X$ be a measurable function from measurable space $(\Omega,\calf)$ to measurable space $(\Omega',\calf')$. We will take the definition that says 
$\expec(Y|X)$ is a random variable $V$ on 
$(\Omega',\calf',\prob_X)$ such that 
$$
\int_S V\,d\prob_X=\int_{\{X\in S\}}Y\,d\prob
$$
for all $S\in\calf'$.  (Here  $\prob_X$ is  the probability measure 
induced by $X$.)  This was the first rigorous definition of conditional expectation,
formulated by Kolmogorov in his seminal 1933 treatise
\cite{komolgorov:1933} and found in many current texts (e.g.,
\cite{ash;doleans:00,Halmos:1974}). Kolmogorov used the Radon-Nikodym Theorem to show that  such a $V$ 
exists whenever $\expec(Y)$ exists, and is unique in the sense that any two such $V$'s  differ only on a set of measure~0.  Some sources (e.g., \cite{feller:1971})  do not define  $\expec(Y|X)$ 
as  $V$,  but instead as $V\compose X$ (or an equivalent expression), which is a random variable on $(\Omega,\calf,\prob)$.
Lo\`{e}ve adopts the latter definition, but remarks that the original Komolgorov definition is the 
``usual interpretation''  (cf. \cite{loeve:1978}, p.\ 343).
$V\compose X$ is sometimes denoted $\expec(Y|\sigma(X))$, where $\sigma(X)$ is the $\sigma$-algebra induced by $X$.

The  situation described in the introductory paragraph arises naturally.  Suppose, for example, that we want
to approximate the probability of some event $S\subseteq\Omega$, where $\Omega$ is the solid
unit $n$-cube $[0,1]^n$.  If $n$ is large, it may not be feasible to generate random
points in $\Omega$  so instead we 
try to approximate the probability of a random lower-dimensional cross-section of $S$. We generate 
the cross-section
by randomly choosing a projection $U_i{:}\ \real^n\rightarrow\real^k$, where $k<n$,
from some fixed collection $U_0,U_1,\ldots,U_{t-1}$ of independent projections, then
randomly choosing some $x\in[0,1]^k$ and  intersecting $S$ 
with  the affine subspace $\{U_i=x\}$.  If we let $Z$ be the 
indicator function for $S$, $W_i=\expec(Z|U_i)$, and $W=(W_0+W_1+\cdots+W_{t-1})/t$,
then $W(x)$ is the probability (or expected volume) of this cross-section.

Take a specific example.  Let $n=t$ and 
fix $p$  between 0 and 1.   
Set  $S=[0,p^{1/t}]^t$ so $\expec(Z)=p$.
The $t$ projection functions $U_i$ given by
$U_i(y_0,y_1,\ldots,y_{t-1})=y_i$ are {i.i.d.} random objects and we
compute each $W_i=\expec(Z|U_i)$ to be
$$
  W_i(x)=\left\{
\begin{array}{ll}
 p^{1-1/t}, &  \mbox{if $x\leq p^{1/t}$},\\
 0, &  \mbox{otherwise}.
 \end{array}
\right.
$$
The random variables $W_i$   are identical, so $W$, their average, is the same function.
Now compare:
$Z$ takes the value 1 with probability $p$;  $W$ takes the value  $p^{1-1/t}$ with probability
$p^{1/t}$.  For large $t$, $W$ is more concentrated, taking a value closer to the
expectation $p$ on an event whose probability is $\expec(Z)^{1/t}$.  In fact,
for any $\varepsilon$ such that $0<\varepsilon < p^{1-1/t}$, $\prob_U\{W>\varepsilon \}=\expec(Z)^{1/t}$.

Our first  conditional expectation bound, Theorem~\ref{mainthm}(i),
says that something like  this holds in general. Since the random objects $U_i$ are
identically distributed we may write $U_i\sim U$ for some fixed $U$.   We show that when  $0\leq Z\leq 1$ and $0<\varepsilon <1$, 
\begin{equation}
\label{pone}
  \expec(Z)\leq\prob_U\{W> \varepsilon\}^t+t\varepsilon.
\end{equation}
This gives a lower bound for $\prob_U\{W> \varepsilon\}$ when  $t\varepsilon$ is
small, and, therefore, a tail bound 
$$
\prob_U\{W\leq  \varepsilon\}\leq 1- (\expec(Z)-t\varepsilon)^{1/t}.
$$

A further result, Theorem~\ref{secmainthm}(i),
dispenses with the
hypothesis that the random objects $U_i$ are identically distributed. The bound
is given in terms of product of tail probabilities $\prob_{U_i}\{W_i > \varepsilon_i\}$ plus
a correction term $t\varepsilon$,  where $\varepsilon$ is the average of the values $\varepsilon_i$.

 Our second  conditional expectation bound, Theorem~\ref{mainthm}(ii),
weakens the hypothesis  by replacing  independence of the random objects $U_i$ 
with  a property, introduced here, called $\beta$-{\em independence}
(where $0\leq\beta\leq 1$). In the case of identically distributed
random objects $U_i$, $\beta$-independence holds if for
all events $T_i\in \calf'$,setting $\prob\{U_i\in T_i\}=1-\nu_i$, we have
\begin{equation}
\label{betaind}
 \prob\{\bigwedge_{0\leq i < t}  U_i\in T_i\}\leq \prod_{0\leq i< t} (1-\beta\,\nu_i).
\end{equation} 
Under this hypothesis we conclude 
\begin{equation}
\label{ptwo}
  \expec(Z)\leq(\alpha+\beta\,\prob_U\{W> \varepsilon\})^t+t\varepsilon,
\end{equation}
where $\alpha=1-\beta$. It is not
difficult to see that 1-independence is equivalent to independence,
so that part (i) of Theorem~\ref{mainthm} is a special case of part (ii).
We should note that Theorem~\ref{mainthm}(ii) does not require full 
$\beta$-independence.  It suffices that (\ref{betaind}) holds when the events
$T_i$ are identical, i.e.,
\begin{equation}
\label{hitting}
  \prob\{\bigwedge_{0\leq i < t}  U_i\in T\}\leq (1-\beta\,\nu)^{t},
\end{equation}  
where  $\prob\{U_i\in T\}=1-\nu$.
However, we show a related result,  Theorem~\ref{secmainthm},
which dispenses with the condition that the random objects $U_i$ be identically
distributed, but requires full $\beta$-independence  given by (\ref{betaind}).

Inequalities (\ref{pone}) and (\ref{ptwo}) are not difficult to prove.
We regard them as useful probabilistic tools   similar  to Chernoff bounds. But whereas we
use Chernoff bounds to show that an efficient  probabilistic algorithm has a
high probability of returning the correct result, 
we use conditional expectation bounds to show  that
all efficient probabilistic  algorithms have a low probability of breaking a cryptographic 
construction.   One of the principal contributions of this paper is to show how conditional
expectation bounds  can simplify certain security proofs.

The earliest example of such a security proof
deals with  Yao's construction \cite{yao:1982}  of  a strongly  
one-way function as a   direct power of a weakly one-way
function. (See Section~\ref{oneway} for precise definitions.)  The idea is simple.  From $F$, a  weakly one-way function,
define
$$
  F'(x_0x_1\cdots x_{t-1})=(F(x_0),F(x_1),\ldots, F(x_{t-1})),
$$
where $x_0,x_1,\ldots ,x_{t-1}\in\{0,1\}^n$ and  $t=t(n)$ is suitably chosen polynomial.  
The proof  that $F'$ is  strongly one-way  is not so simple. 

The first proof of this result appeared  in  an online draft of
a text by Goldreich \cite{goldreich:2001}.
Let us describe it in terms of a
reduction between cryptographic primitives
(as found in \cite{luby:1996,crescenzo:99,lin:2005}).   
For one-way functions, a
 {\em reduction} is a pair $({\mathcal R},{\mathcal R^*})$
where $\mathcal R$  is an efficient transformation taking 
$F$ to $F'$  and $\mathcal R^*$ is an efficient transformation
taking each  randomized function  $G'$ attempting 
to invert $F'$ to  a randomized function
$G$ attempting to invert~$F$.    
There is also a condition, detailed in section~\ref{oneway},  relating  the  probability that $G$ inverts $F$  to  the probability that $G'$ inverts $F'$.  In this situation, we say  $({\mathcal R},{\mathcal R^*})$ is
a reduction from  $F$ to $F'$ (although it would
be more accurate  to say
it is  a reduction of the invertibility problem for $F$ to
the invertibility problem for $F'$).

Goldreich's proof implicitly gives such a reduction.
${\mathcal R}$ is the  the direct power construction.  The crux of the proof is to specify
${\mathcal R^*}$ so that if  $G'$ is probabilistic polynomial time (p.p.t.) computable then so is   $G$, and  whenever   $G$ inverts $F$ with probability  significantly less 
than~1,    $G'$  inverts $F'$ with  negligible probability.  If
$F$ is  weakly one-way, every {p.p.t.} $G$ inverts $F$ with probability significantly less than~1,
and, hence, every {p.p.t.} $G'$ inverts $F'$ with negligible probability.  Therefore,
 $F'$ is  strongly one-way.
The proof of this in \cite{goldreich:2001} relies on a number of technical subsidiary results running to several pages.  Using the conditional expectation inequality (\ref{pone}), we are able to significantly simplify the proof (cf. Theorem~\ref{weaktostrong}).

Goldreich \etal\ \cite{goldreich:1990} later pointed
out a drawback to the direct power construction: the associated reduction
is not   security preserving  (see section~\ref{security} for the precise
definition).
The reason is that the forward transformation
${\mathcal R}$ takes $F$ with input size $n$ to $F'$ with
input size  $n\,t(n)$;
${\mathcal R^*}$, then, takes $G'$ with input size $n\,t(n)$ to $G$ with input size $n$.  It follows that if $S(n)$ is the security 
of $F$ against $G$ and $S'(n)$ is the security of $F'$ against
$G'$, then $S(n)$ is of the same order as  $S'(n\,t(n))$, which is much
larger than $S'(n)$ when $S'$ has superpolynomial growth.
They remedied this deficiency by replacing  the direct
power  with  a more elaborate  construction employing expander graphs
to control input size blowup.   
They give a security preserving reduction in the restricted case where $F$  is a weakly one-way permutation and
 $F'$ is a strongly one-way 
permutation, but now rather than $S'(n\,t(n))$, the security of $F'$ against $G'$ is   $S'(n+\omega(\log n))$, which is of the same order as $S'(n)$ (in a sense made precise in section~\ref{security}).
In this case, (\ref{pone}) does not suffice, so we use (\ref{ptwo}).
Again, the original proof relies on many subsidiary technical results which we bypass by means of
the conditional expectation bound (\ref{ptwo}).
To apply it, though, we must satisfy  the $\beta$-independence condition.

The hitting property for expander graphs is closely related to $\beta$-independence.  The version
of hitting property in \cite{vadhan:2012} is equivalent to the statement that if $U_i$ is the $i$-th
vertex along a random walk in an expander graph ${\mathcal G}=(V,E)$ whose spectral gap is $\beta$, $T\subseteq V$, and $|T|/|V|=1-\nu$, then (\ref{hitting}) holds.  As we noted, (\ref{hitting})  is weaker than the
full $\beta$-independence definition  (\ref{betaind}) where we have $T_i\subseteq V$ for each
$i<t$, but it is possible to prove this stronger version of the hitting property for expander
graphs; this result does not seem to be stated anywhere in the literature (but does follow
from Theorem~3.11 of \cite{hoory;linial;wigderson;2006} using the Cauchy-Schwarz inequality).  In section~\ref{betasec}
we will show an even stronger result.  We will show that (\ref{ptwo}) holds for
random walks on {\em directed graphs} that are hybrids formed from expander graphs and vertex
permutations.  This significantly simplifies the proof for the  reduction from weakly to strongly one-way
permutations in \cite{goldreich:1990,goldreich:2001}.

The outline of the paper is as follows.
 In Section~\ref{bounds} we prove the
conditional expectation bounds.   In Section~\ref{betasec} we show
that the  $\beta$-independence property
(\ref{betaind}) holds in  hybrid expander-permutation directed graphs.  
 In Section~\ref{oneway} we define and discuss the
properties of  weakly and strongly 
one-way functions.  In Section~\ref{application} we 
use the  conditional expectation 
inequality (\ref{pone}) to simplify Goldreich's proof that
existence of weakly one-way functions implies the 
existence of strongly one-way functions.
In Section~\ref{security} we use  the  conditional expectation 
inequality (\ref{ptwo}) to show
that the hybrid expander-permutation directed graph construction
yields  a strongly one-way permutation when the expander graph is
from a  certain  fully explicit family expander graphs.  
In section~\ref{security}  we show that there is a security
preserving reduction from weakly to strongly one-way permutations.
Our result is stronger than the one in
 Goldreich \etal\ \cite{goldreich:1990} because they require
 that their expander graph families have fully explicit polynomial-time computable 
edge-colorings.
 We will show that 
 the edge-coloring assumption is not necessary.  In section~\ref{conclusion}, we conclude with some
 brief remarks.

\Section{Conditional Expectation Bounds}
\label{bounds}

Before stating our conditional expectation bounds, we recall some basic terminology and notation
from probability theory.  A {\em random object} on $(\Omega,\calf,\prob)$ is a measurable function
$X$ from measurable algebra $(\Omega,\calf)$ to  measurable algebra
$(\Omega',\calf')$. 
When $\Omega' =\real$ and $\calf'$ is the $\sigma$-algebra of Borel sets,
$X$ is  a {\em random variable}.  The {\em indicator
function} for an event $S$ is
$$
\indica_S(x)=\left\{
\begin{array}{ll}
 1, &  \mbox{if $x\in S$},\\
 0, &  \mbox{otherwise},
 \end{array}
\right.
$$
 $\expec(Z)$ is the expectation of a random variable $Z$
and $\expec_S(Z)$ denotes $\expec(\indica_S\cdot Z)$.
A random object $U{:}\ (\Omega,\calf)\rightarrow(\Omega',\calf')$ {\em induces} the  probability measure $\prob_U$ (also called the  {\em distribution} of $U$) on $(\Omega',\calf')$ given by
$
  \prob_U(T)=\prob\{U\in T\}.
$

As we noted earlier, we introduce here a generalization of the notion of independence for random objects.
Let $0\leq\beta\leq 1$ and $\alpha=1-\beta$.  
The random objects $U_i{:}\ (\Omega,\calf)\rightarrow(\Omega'_i,\calf_i')$, $i<t$,  are {\em $\beta$-independent}
if for all events $T_i\in\calf_i'$ with $\prob_{U_i}(T_i)=\mu_i=1-\nu_i$,
$$\prob\{\bigwedge_{0\leq i < t}  U_i\in T_i\}\leq \prod_{0\leq i< t} (1-\beta\,\nu_i)$$
or, equivalently,
$$\prob\{\bigwedge_{0\leq i < t}  U_i\in T_i\}\leq \prod_{0\leq i< t} (\alpha+\beta\,\mu_i).$$
If, in addition, the random objects $U_i$ are identically distributed, they are said to be $\beta$-{i.i.d.}

We now state the conditional expectation bounds.

\begin{theorem}
\label{mainthm}
Let $Z$ be a random variable such that $0\leq Z\leq 1$, and $$U_i{:}\ (\Omega,\calf)\rightarrow(\Omega' ,\calf' ),  i<t,$$
be identically distributed
with $U_i\sim U$ for some fixed $U$.  Set $W_i=\expec(Z|U_i)$, $W=(W_0+W_1+\cdots+W_{t-1})/t$ and take any $0<\varepsilon <1$.
\begin{itemize}
\upitem{i} If the random objects  $U_i$ are independent,
$$
  \expec(Z)\leq\prob_{U}\{W> \varepsilon\}^t+t\varepsilon.
$$

\upitem{ii}  If the random objects $U_i$ are $\beta$-independent,
$$
  \expec(Z)\leq(\alpha+\beta\,\prob_{U}\{W> \varepsilon\})^t+t\varepsilon,
$$
where $\alpha=1-\beta$.
\end{itemize}
\end{theorem}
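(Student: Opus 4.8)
The plan is to write $\expec[Z]$ as the sum of its contributions on two complementary events: the event $B$ that \emph{every} $U_i$ lands in the region of the target space $\Psi$ where the average $W$ exceeds $\varepsilon$, and its complement $B^c$. The contribution on $B$ will be controlled by the product term through ($\beta$-)independence, and the contribution on $B^c$ by $t\varepsilon$. Since $1$-independence of random objects coincides with ordinary independence, it suffices to prove (ii); part (i) is then the special case $\beta=1$, $\alpha=0$.

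Concretely, I would work in $(\Psi,\calf',\prob_U)$, setting $T'=\{W>\varepsilon\}$ and $T=\Psi\setminus T'=\{W\le\varepsilon\}$ (both lie in $\calf'$ because each $W_i=\expec[Z|U_i]$, hence $W$, is $\calf'$-measurable), and writing $\mu=\prob_U(T')=\prob_U\{W>\varepsilon\}$, $\nu=1-\mu$. Put $B=\{\bigwedge_{i<t}U_i\in T'\}\se\Omega$, so that $\expec[Z]=\expec_B[Z]+\expec_{B^c}[Z]$ since $B,B^c$ partition $\Omega$. Because $0\le Z\le 1$ we have $\expec_B[Z]\le\prob(B)$; and applying $\beta$-independence of the $U_i$ to the single event $T'$ (for which $\prob\{U_i\in T'\}=\mu=1-\nu$, as $U_i\sim U$) gives
$$
 \prob(B)=\prob\Big(\bigcap_{i<t}\{U_i\in T'\}\Big)\le\prod_{i<t}(\alpha+\beta\mu)=\big(\alpha+\beta\,\prob_U\{W>\varepsilon\}\big)^t ,
$$
which reduces to $\prob_U\{W>\varepsilon\}^t$ when $\beta=1$.

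For the $B^c$ term, note $B^c=\bigcup_{i<t}\{U_i\in T\}$, so subadditivity (legitimate since $Z\ge 0$) gives $\expec_{B^c}[Z]\le\sum_{i<t}\expec_{\{U_i\in T\}}[Z]$. By the defining identity of conditional expectation recalled in Section~\ref{probtheory}, $\expec_{\{U_i\in T\}}[Z]=\expec_T[\expec[Z|U_i]]=\expec_T[W_i]$, the right-hand expectation being taken against $\prob_{U_i}=\prob_U$. Summing over $i$ and using linearity,
$$
 \expec_{B^c}[Z]\le\sum_{i<t}\expec_T[W_i]=\expec_T[W_0+\cdots+W_{t-1}]=t\,\expec_T[W]\le t\varepsilon\,\prob_U(T)\le t\varepsilon ,
$$
the crucial inequality being that $W\le\varepsilon$ on $T$. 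Adding the two bounds yields (ii), and hence (i).

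The step to handle with care is the identity $\expec_{\{U_i\in T\}}[Z]=\expec_T[W_i]$, which silently passes from an integral over $(\Omega,\calf,\prob)$ to one over $(\Psi,\calf',\prob_U)$; this is precisely the Radon-Nikodym characterization of $W_i=\expec[Z|U_i]$, and one should also observe that the $\prob_U$-a.e.\ ambiguity in $W_i$ (and hence in $T$, $T'$) is harmless, since it alters each $\{U_i\in T'\}$ only by a $\prob$-null set and so affects neither $\prob(B)$ nor the displayed expectations. The other point to get right is that the $W_i$ must be summed \emph{before} invoking $W\le\varepsilon$ on $T$: estimating each $\expec_T[W_i]$ on its own (e.g.\ via $W_i\le tW$, which holds since $W_j\ge 0$) would cost a spurious extra factor $t$ and give only $t^2\varepsilon$. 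Everything else is routine bookkeeping.
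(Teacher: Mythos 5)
Your proposal is correct and follows essentially the same route as the paper: the same decomposition over the event $\bigcap_{i<t}\{U_i\in\{W>\varepsilon\}\}$ and its complement, the same use of ($\beta$-)independence applied to the single event $\{W>\varepsilon\}$, and the same trick of summing the $W_i$ into $tW$ before invoking $W\le\varepsilon$. The only cosmetic difference is that you deduce (i) from (ii) via $\beta=1$ while the paper runs the two cases in parallel, a reduction the paper itself notes is valid.
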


\begin{proof}
Let $T=\{W> \varepsilon\}$, $S_i=\{U_i\in T\}$, and $S=\bigcap_{ i<t} S_i$.
By the  the law of total expectation, 
\begin{equation}
\label{mainstep}
  \expec(Z) = \expec(Z|S)\,\prob(S)+\expec(Z|\overbar{S})\,\prob(\overbar{S}),
\end{equation}
Bound the right side of this equation as follows.
$\expec(Z|S)\leq 1$ since $Z\leq 1$.
Independence of the random objects $U_i$ in part (i) of the theorem implies
$$\prob(S)=\prob(\bigcap_{ i<t}S_i)=\prod_{ i<t}\prob_{U_i}(T)
=\prob_{U}\{W> \varepsilon\}^t,$$
and $\beta$-independence in part (ii) implies
$$
\prob(S)\leq  \prod_{ i<t}(\alpha + \beta\, \prob_{U_i}(T))
=(\alpha+\beta\,\prob_{U}\{W> \varepsilon\})^t.
$$
From  $\overbar{S}=\bigcup_{ i<t} \overbar{S}_i$  we have 
\begin{eqnarray*}
  \expec(Z|\overbar{S})\,\prob(\overbar{S})    &=&\expec_{\overbar{S}}(Z)\\
                                             &\leq& \sum_{ i<t}\expec_{\overbar{S}_i}(Z),
 \end{eqnarray*}
 which,  by the law of iterated expectations, is equal to
 \begin{eqnarray*}
\sum_{ i<t}\expec_{\overbar{T}}(\expec(Z|U_i))
                                             &=& \sum_{ i<t}\expec_{\{W\leq \varepsilon \}}(W_i)\\
                                             &=& \expec_{\{W\leq \varepsilon \}}(\sum_{ i<t}W_i)\\
                                             &=& \expec_{\{W\leq \varepsilon \}}(t W)\\
                                             &\leq& t\varepsilon,
\end{eqnarray*}
since $\overbar{S}_i=\{U_i\in \overbar{T}\}$ and $\overbar{T}=\{W\leq \varepsilon \}$.

Substitution into (\ref{mainstep}) completes the proof.
\end{proof}

\begin{remark}
As we noted in the introduction, we may
relax the hypotheses of  Theorem~\ref{mainthm} so that rather than 
independence in part (i), we  require only
that for every event $T$,
$$
  \prob\{\bigwedge_{ i<t}U_i\in T\}=\prod_{ i<t}\prob_{U_i} (T),
$$
and similarly for $\beta$-independence in part (ii).
\end{remark}

We do not use the  following theorem in this paper,
but it may prove useful in other contexts where the random variables
$U_i$ are not identically distributed.

\begin{theorem}
\label{secmainthm}
Let $Z$ be a random variable such that $0\leq Z\leq 1$, $$U_i{:}\ (\Omega,\calf)\rightarrow(\Omega_i' ,\calf_i' ),  i<t,$$
be random objects with $W_i=\expec(Z|U_i)$, $0<\varepsilon _i<1$ for $ i<t$,
and $\varepsilon=(\sum_{ i< t}\varepsilon_i)/t$.
\begin{itemize}
\upitem{i} If the random objects $U_i$ are independent,
$$
  \expec(Z)\leq\prod_{ i<t}\prob_{U_i}\{W_i > \varepsilon_i\}+t\varepsilon.
$$

\upitem{ii}  If the random objects $U_i$ are $\beta$-independent,
$$
  \expec(Z)\leq\prod_{ i<t}(\alpha+\beta\,\prob_{U_i}\{W_i> \varepsilon_i\})+t\varepsilon,
$$
where $\alpha=1-\beta$.
\end{itemize}
\end{theorem}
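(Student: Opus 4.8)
The plan is to mimic the proof of Theorem~\ref{mainthm} almost verbatim, the only change being that the single threshold event $\{W>\varepsilon\}$ is replaced by the family of index-dependent events $\{W_i>\varepsilon_i\}$, so that no averaging of the $W_i$ across a common space is needed. Concretely, I would set $T_i=\{W_i>\varepsilon_i\}$ (an event in the measurable algebra $(\Psi,\calf')$ carrying the distribution $\prob_{U_i}$ on which $W_i$ lives), $S_i=\{U_i\in T_i\}\in\calf$, and $S=\bigcap_{i<t}S_i$. The law of total expectation then gives
$$
  \expec[Z]=\expec[Z|S]\,\prob(S)+\expec[Z|\overbar{S}]\,\prob(\overbar{S}),
$$
and it remains to bound the two terms.

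For the first term, $\expec[Z|S]\leq 1$ because $Z\leq 1$, and independence (resp.\ $\beta$-independence) of the $U_i$ bounds $\prob(S)=\prob(\bigcap_{i<t}\{U_i\in T_i\})$ by $\prod_{i<t}\prob_{U_i}(T_i)$ (resp.\ by $\prod_{i<t}(\alpha+\beta\,\prob_{U_i}(T_i))$); since $\prob_{U_i}(T_i)=\prob_{U_i}\{W_i>\varepsilon_i\}$, this is exactly the leading term in each of the claimed bounds. Here it is essential to invoke \emph{full} $\beta$-independence (\ref{betaind}), not merely the hitting form (\ref{hitting}), since the sets $T_i$ genuinely differ across $i$; this is the one structural place where the non-identically-distributed case departs from Theorem~\ref{mainthm}.

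For the second term, I would use $\overbar{S}=\bigcup_{i<t}\overbar{S}_i$ together with (\ref{union}) to get $\expec[Z|\overbar{S}]\,\prob(\overbar{S})=\expec_{\overbar{S}}[Z]\leq\sum_{i<t}\expec_{\overbar{S}_i}[Z]$. Then, since $\overbar{S}_i=\{U_i\in\overbar{T}_i\}$ and $\overbar{T}_i=\{W_i\leq\varepsilon_i\}$, the identity $\expec_{\{X\in S\}}[Y]=\expec_S[\expec[Y|X]]$ of Section~\ref{probtheory} (or (\ref{covfe}) combined with the change of variables formula) rewrites each summand as $\expec_{\overbar{T}_i}[\expec[Z|U_i]]=\expec_{\{W_i\leq\varepsilon_i\}}[W_i]$. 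Because $W_i\geq 0$ (as $Z\geq 0$) and $W_i\leq\varepsilon_i$ on the event of integration, $\expec_{\{W_i\leq\varepsilon_i\}}[W_i]\leq\varepsilon_i\,\prob_{U_i}\{W_i\leq\varepsilon_i\}\leq\varepsilon_i$, so the sum is at most $\sum_{i<t}\varepsilon_i=t\varepsilon$. Substituting both bounds into the law of total expectation yields (i) and (ii).

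I do not anticipate a real obstacle: the argument is a routine adaptation, and the correction term is if anything cleaner here than in Theorem~\ref{mainthm}, since each bound $\expec_{\{W_i\leq\varepsilon_i\}}[W_i]\leq\varepsilon_i$ is carried out in its own space $(\Psi,\calf',\prob_{U_i})$. The only points requiring care are the bookkeeping of which probability measure each expectation is taken against (the $U_i$ being no longer identically distributed) and, as noted above, ensuring the $\beta$-independence hypothesis is used in its full form.
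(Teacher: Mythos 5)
Your proposal is correct and follows essentially the same route as the paper's own proof: the same events $T_i=\{W_i>\varepsilon_i\}$, $S_i=\{U_i\in T_i\}$, the same total-expectation decomposition, and the same term-by-term bounds, with the correction term $\sum_{i<t}\varepsilon_i=t\varepsilon$. Your added remarks (that full $\beta$-independence is needed since the $T_i$ differ, and that $W_i\geq 0$ justifies the bound $\expec_{\{W_i\leq\varepsilon_i\}}[W_i]\leq\varepsilon_i$) are accurate clarifications of steps the paper leaves implicit.
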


\begin{proof}
Let $T_i=\{W_i> \varepsilon_i\}$, $S_i=\{U_i\in T_i\}$, and $S=\bigcap_{ i<t} S_i$.
As in the previous theorem, we bound the right side of
\begin{equation*}
  \expec(Z) = \expec(Z|S)\,\prob(S)+\expec(Z|\overbar{S})\,\prob(\overbar{S}).
\end{equation*}
As before,  $\expec(Z|S)\leq 1$.  Also, for part (i) of the theorem,
$$\prob(S)=\prob(\bigcap_{ i<t}S_i)=\prod_{ i<t}\prob_{U_i}(T_i),$$
and for  part (ii),
$$
\prob(S)\leq  \prod_{ i<t}(\alpha + \beta\, \prob_{U_i}(T_i)).
$$
We have
$$
  \expec(Z|\overbar{S})\prob(\overbar{S})  \leq \sum_{ i<t}\expec_{\overbar{S}_i}(Z),
$$
This last summation is equal to
 \begin{eqnarray*}
\sum_{ i<t}\expec_{\overbar{T}_i}(\expec(Z|U_i))
                                             &=& \sum_{ i<t}\expec_{\{W_i\leq \varepsilon_i\}}(W_i)\\
                                             &\leq& \sum_{ i<t}\varepsilon_i,
\end{eqnarray*}
since $\overbar{S}_i=\{U_i\in \overbar{T}_i\}$ and $\overbar{T}_i=\{W_i\leq \varepsilon_i\}$.
The theorem  follows by substitution.
\end{proof}

\Section{Expander-Permutation Graphs and $\beta$-Independence.}
\label{betasec}

Theorem~\ref{beta}, the main result of this section, provides a natural construction of
$\beta$-independent random objects based on  hybrid expander-permutation directed graphs.  

Let  $\bfa$ be a symmetric, real-valued  matrix of dimension $N$.  In particular,
$\bfa$ is Hermitian
(\cf Horn and Johnson \cite{horn;johnson:1985} for basic results
concerning Hermitian matrices), so it
has $N$ real eigenvalues.  List them (with
repetitions according to multiplicities)  in nonincreasing order:
$$\lambda_0(\bfa)\geq\lambda_1(\bfa)\cdots\geq\lambda_{N-1}(\bfa).$$
  We write $\lambda_i$ rather than $\lambda_i(\bfa)$ when matrix
$\bfa$ is clear from context.  Because $\bfa$ is Hermitian, there is an orthonormal basis
$\bfu_0,\bfu_1,\ldots,\bfu_{N-1}\in\real^n$, where $\bfu_i$ is a left (row) eigenvector
associated with eigenvalue $\lambda_i(\bfa)$.   Since $\bfa$ is symmetric, the respective right eigenvectors are transposes of the left eigenvectors.

We will consider only real-valued vectors.  We write
vectors in lower case boldface  and  denote the transpose of
vector $\bfv$ by $\bfv^T$, so the inner product  of row vectors $\bfv$ and
$\bfw$ is $\langle\bfv,\bfw\rangle=\bfv\cdot\bfw^T$ and
$|\bfv|^2=\langle\bfv,\bfv\rangle$.   The Cauchy-Schwarz
inequality states that $|\langle\bfv,\bfw\rangle|\leq|\bfv|\,|\bfw|$.

Let  $\mathcal{G}=(V,E)$ be a $d$-regular undirected graph and $\bfa$ be
its {\em transition matrix}; \ie, $\bfa=(a_{i,j})$, where
$$
a_{i,j}=\left\{
\begin{array}{ll}
 1/d, &  \mbox{if there is an edge from vertex $i$ to vertex $j$},\\
 0, &  \mbox{otherwise}.
 \end{array}
\right.
$$
$\bfa$ is symmetric,   nonnegative, and doubly stochastic.  If $\mathcal{G}$
is connected, then $\bfa$ is irreducible (\cf Seneta \cite{seneta:1981}
for basic results concerning  nonnegative matrices).
By the Perron-Frobenius Theorem, the
largest eigenvalue of $\bfa$ is $\lambda_0=1$, the common row sum.
Also, this  is a simple eigenvalue so $\lambda_0>\lambda_1$ and
no other eigenvalue is larger in magnitude than $\lambda_0$;
it follows that $\lambda_{N-1}\geq -1$.  If  $\mathcal{G}$ is not bipartite, then $\lambda_{N-1}>-1$.
Under these conditions, define $\alpha=\alpha(\bfa)$ to be the second largest eigenvalue magnitude of $\bfa$, and
 $\beta$ to be the {\em spectral gap}
of $\bfa$, \ie, the difference between the largest and second
largest eigenvalue magnitudes.  That is,
$
  \alpha=\max(|\lambda_1(\bfa)|,|\lambda_{N-1}(\bfa)|)$ and 
 $ \beta = 1-\alpha$.

\begin{definition}
A connected, non-bipartite graph $\mathcal{G}=(V,E)$ is an {\em $(N,d,\alpha)$-expander graph} if $|V|=N$,
$\mathcal{G}$ is $d$-regular, and the second largest eigenvalue magnitude of its transition
matrix is at most $\alpha$.
\end{definition}

For the remainder of the section,
$\bfa$ is the transition matrix for an $(N,d,\alpha)$-expander
graph $\mathcal{G}$, $\alpha+\beta=1$, and vectors $\bfu_0,\bfu_1,\ldots,\bfu_{N-1}\in\real^n$ form
an orthonormal basis, where $\bfu_i$ is a left eigenvector
associated with eigenvalue $\lambda_i(\bfa)$.  We take
$\bfu_0$ to be $N^{-1/2}(1,1,\ldots,1)$.
For reference, we collect a few simple facts.  Here the {\em projection matrix}  for a set $S\subseteq \{0,1,\ldots,N-1\}$ is the matrix
$\bfp=(p_{i,j})$ where $p_{i,i}=1$ if $i\in S$ and all other entries $p_{i,j}$
are~0.

\begin{proposition}
\label{ortho}
Let $V_0$ be the subspace of $\real^N$ spanned by $\bfu_0$ and
$V_1$ its orthogonal space, the subspace
spanned by $\bfu_1,\ldots,\bfu_{N-1}$.
\begin{itemize}
\renewcommand{\theenumi}{(\roman{enumi})}
\upitem{i}  $V_0$ and $V_1$ are invariant under the action of $\bfa$.

\upitem{ii} Every vector $\bfw\in\real^N$ can be decomposed as a
sum of two vectors $\bfw=\bfx+\bfy$ where $\bfx\in V_0$ and $\bfy\in V_1$.

\upitem{iii} If $\bfx\in V_0$, then $\bfx\bfa=\bfx$.  If $\bfy\in V_1$, then
$|\bfy\bfa|\leq\alpha|\bfy|$.

\upitem{iv} If $\bfp$ is the projection matrix for $T\subseteq \{0,1,\ldots,N-1\}$, then $|\bfu_0\bfp|^2=|T|/N$.
\end{itemize}
\end{proposition}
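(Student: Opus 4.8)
The plan is to verify the three items in order, using only elementary linear algebra together with the spectral facts already recorded for the transition matrix $\bfa$ of an $(N,d,\alpha)$-expander graph: $\bfa$ is real symmetric, it has an orthonormal eigenbasis $\bfu_0,\ldots,\bfu_{N-1}$ with $\bfa\bfu_i=\lambda_i\bfu_i$, we have $\lambda_0=1$ with $\bfu_0=N^{-1/2}(1,\ldots,1)^T$, and $|\lambda_i|\leq\alpha$ for all $i\geq 1$.

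For part~(i): $V_0=\mathrm{span}(\bfu_0)$ is invariant because $\bfa\bfu_0=\lambda_0\bfu_0=\bfu_0\in V_0$, hence $\bfa(c\bfu_0)=c\bfu_0\in V_0$. For $V_1=\mathrm{span}(\bfu_1,\ldots,\bfu_{N-1})$, take $\bfy=\sum_{i\geq 1}c_i\bfu_i$; then $\bfa\bfy=\sum_{i\geq 1}c_i\lambda_i\bfu_i$, which is again a linear combination of $\bfu_1,\ldots,\bfu_{N-1}$ and so lies in $V_1$. (Alternatively, one notes $\bfa$ symmetric forces the orthogonal complement of an invariant subspace to be invariant, but the explicit eigenbasis argument is cleaner here.) For part~(ii): since $\bfu_0,\ldots,\bfu_{N-1}$ is an orthonormal basis of $\real^n$, every $\bfw$ expands uniquely as $\bfw=\sum_{i}\langle\bfw,\bfu_i\rangle\bfu_i$; set $\bfx=\langle\bfw,\bfu_0\rangle\bfu_0\in V_0$ and $\bfy=\sum_{i\geq 1}\langle\bfw,\bfu_i\rangle\bfu_i\in V_1$.

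For part~(iii): if $\bfx\in V_0$ then $\bfx=c\bfu_0$ for some scalar $c$, so $\bfa\bfx=c\,\bfa\bfu_0=c\bfu_0=\bfx$. If $\bfy\in V_1$, write $\bfy=\sum_{i\geq 1}c_i\bfu_i$; by orthonormality $|\bfy|^2=\sum_{i\geq 1}c_i^2$ and $|\bfa\bfy|^2=|\sum_{i\geq 1}c_i\lambda_i\bfu_i|^2=\sum_{i\geq 1}c_i^2\lambda_i^2\leq\alpha^2\sum_{i\geq 1}c_i^2=\alpha^2|\bfy|^2$, using $\lambda_i^2\leq\alpha^2$ for each $i\geq 1$; taking square roots gives $|\bfa\bfy|\leq\alpha|\bfy|$.

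None of this is genuinely hard — the only point requiring the standing hypotheses (rather than pure linear algebra) is the bound $|\lambda_i|\leq\alpha$ for $i\geq 1$ used in part~(iii), which is exactly the defining property of an $(N,d,\alpha)$-expander, and the identification of $\bfu_0$ with the all-ones direction and $\lambda_0=1$ from Perron–Frobenius, which was already established above. So the "obstacle," such as it is, is merely to be careful that the eigenvalue estimate is applied only on $V_1$ (where it holds) and that the decomposition in~(ii) is taken with respect to the particular orthonormal eigenbasis, so that the summands automatically land in the claimed invariant subspaces.
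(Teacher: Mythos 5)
Your proof is correct and is exactly the standard spectral argument the paper has in mind; the paper simply states Proposition~\ref{ortho} as "a few simple facts" without proof, and your verification via the orthonormal eigenbasis, the eigenvalue $\lambda_0=1$ on $V_0$, and the bound $|\lambda_i|\leq\alpha$ for $i\geq 1$ on $V_1$ supplies precisely the omitted details.
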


Random walks on expander graphs with spectral gap $\beta$ give rise to $\beta$-{i.i.d.} random objects.
A {\em $t$-walk} on $\mathcal{G}$ is a $(t+1)$-tuple $\bfy=(y_0,y_1,\ldots,y_t)$
of (not necessarily distinct) vertices such that  $\{y_i,y_{i+1}\}\in E$ for $ i<t$.
Let $\Omega$ be the set of all $t$-walks on $\mathcal{G}$ and $\prob$ be the uniform
probability measure on $\Omega$.  We shall see that the projection functions $U_i$,
defined by $U_i(\bfy)=y_i$  are $\beta$-independent. 

Of course, we
could just take the projection functions 
on the Cartesian product $V^{t+1}$, but many applications require reducing 
 the number of bits needed to represent a sample point.  We  generate a $t$-walk in a unique way by
picking an initial vertex $y_0$  and then choosing each successive vertex
$y_{i+1}$ by traversing one of the $d$ edges incident with $y_i$.  Representing a $t$-walk
this way requires  $\lceil\lg N\rceil + t\lceil \lg d\rceil $ bits while
representing a point in $V^{t+1}$ requires $(t+1)\lceil\lg N\rceil$ bits. 

We now extend this idea to random walks on directed graphs that are expander-permutation hybrids.
Take an $(N,d,\alpha)$-expander graph ${\mathcal G}=(V,E)$ together with  a permutation $F$ on $V$
and define  the composition 
$$
  E'=F\compose E=\{(u,F(v))\mid\{u,v\}\in E\}
$$
obtain  a  directed graph $\mathcal{G'}=(V,E')$ (possibly with loops).
(Goldreich 
\etal\  implicitly use $E'=E\compose F$, but either construction works.)
$\mathcal{G'}$ is a {\em $d$-regular} directed graph in the sense that
every vertex has {\em both} indegree and outdegree $d$. 
Let $\Omega$ be the set of {\em directed $t$-walks}   $\bfy=(y_0,y_1,\ldots,y_t)$,
where $(y_i,y_{i+1})\in E'$ for $ i<t$, 
$\prob$  be the uniform
probability measure on $\Omega$, and $U_i(\bfy)=y_i$, as before.  Theorem~\ref{beta} below
shows that the projection functions $U_i$ are $\beta$-independent.\footnote{Some
sources assert that since $F$ is a permutation, $\mathcal{G}$ and $\mathcal{G'}$
have the same mixing properties.  This is true, but since the adjacency matrix of
$\mathcal{G'}$ is not Hermitian, this assertion does not follow directly.}

$\mathcal G'$  has transition matrix $\bfa'=\bfa\bfb$,
where $\bfb$ is the permutation matrix for $F$ (\ie, the $(i,j)$ entry of $\bfb$ is~1 if $F(i)=j$ and~0 otherwise).  $\bfa'$  represents a step of a random walk on $\mathcal G'$.  When it acts
on a row vector representing a probability distribution on $V$, the result is the succeeding  probability distribution along the
random walk.
$\bfa'$ may also act on  an {\em improper} probability distribution, i.e.,
one whose coordinates are nonnegative and sum to at most~1.   Let
$\Omega$ be the set of all $t$-walks on $\mathcal G'$ and $\prob$ be the 
uniform probability measure on $\Omega$.  
  The {\em terminal probability vector}  of $B\subseteq \Omega$ is
a  vector $(p_0,p_1,\ldots,p_{N-1})$ where  $p_j=\prob(B\cap \{U_t=j\})$
for each vertex $j\in V$.
This is an improper  probability distribution whose coordinates sum to $\prob(S)$.

The following result is well known and follows by standard techniques \cite{hoory;linial;wigderson;2006}.

\begin{proposition}
\label{fundamental}
For $\mathcal G'$ as above, let  $T_i\subseteq V$ and $\bfp_i$ be the 
projection matrix for $T_i$ for $i\leq t$.  Then the terminal probability vector
for  $\{\bigwedge_{ i\leq t} U_i\in T_i\}$ is given by
$$
\bfv\,\bfp_0\bfa'\,\bfp_1\cdots\bfp_{t-1}\bfa'\,\bfp_t,
$$
where $\bfv=N^{-1}(1,1,\ldots,1)$.
\end{proposition}

The idea here is clear: the vector $\bfv$ is the initial probability vector, $\bfa'$ represents a step
along the random walk, and each $\bfp_i$ eliminates paths whose $i$-th vertex is not in $T_i$.

Now we require two lemmas.

\begin{lemma}
\label{eigenbound}
Let $\bfa$, $N$, $d$, $\alpha$, and $\beta$ be as above. Take projection matrices $\bfp$ and $\bfp'$  for
$T$ and $T'\subseteq\{0,1,\ldots,N-1\}$ and set $\mu=|T|/N$, $\mu'=|T'|/N$. Then for all for all  $\bfv\in\real^N$,
$$
|\bfv\,\bfp\bfa\bfp'|\leq (\alpha+\beta\mu)^{1/2}(\alpha+\beta\mu')^{1/2}|\bfv|
$$
\end{lemma}

\begin{proof}
Let $\bfz$ be a unit vector parallel to $\bfv\bfp\bfa\bfp'$ and put $\bfw=\bfv\bfp$, $\bfw'=\bfz\bfp'$
so
\begin{eqnarray*}
 |\bfv\bfp\bfa\bfp'|&=&\langle \bfv\bfp\bfa\bfp',\bfz \rangle\\
                           &=&\langle \bfv\bfp\bfa,\bfz\bfp' \rangle\\
                           &=&\langle \bfw\bfa,\bfw'\rangle\\
                           &=& \langle \bfx\bfa,\bfx'\rangle + \langle \bfx\bfa,\bfy'\rangle
+ \langle \bfy\bfa,\bfx'\rangle + \langle \bfy\bfa,\bfy'\rangle,
 \end{eqnarray*}
where $\bfw=\bfx+\bfy$ and $\bfw'=\bfx'+\bfy'$ are the decompositions  given by Proposition~\ref{ortho}(ii).
Proposition~\ref{ortho}(i) and (iii) imply
$\langle \bfx\bfa,\bfy'\rangle=\langle \bfy\bfa,\bfx'\rangle=0$,
$|\langle \bfx\bfa,\bfx'\rangle|=|\langle \bfx,\bfx'\rangle|= |\bfx|\,|\bfx'|$,
and $|\langle \bfy\bfa,\bfy'\rangle|\leq |\bfy\bfa|\,|\bfy'|\leq \alpha\,|\bfy|\,|\bfy'|$, 
so
\begin{equation}
\label{ineq1}
|\bfv\bfp\bfa\bfp'|\leq |\bfx|\,|\bfx'|+\alpha\,|\bfy|\,|\bfy'|.
\end{equation}
By the Cauchy-Schwarz inequality
\begin{eqnarray*}
|\bfx|\,|\bfx'|+|\bfy|\,|\bfy'|&\leq&(|\bfx|^2+|\bfy|^2)^{1/2}(|\bfx'|^2+|\bfy'|^2)^{1/2}\\
&=&|\bfw|\,|\bfw'|
\end{eqnarray*}
so $|\bfy|\,|\bfy'|\leq|\bfw|\,|\bfw'| - |\bfx|\,|\bfx'|$.  Thus, substituting into (\ref{ineq1}),
\begin{equation}
\label{ineq2}
|\bfv\bfp\bfa\bfp'|
\leq\alpha\,|\bfw|\,|\bfw'|+\beta\,|\bfx|\,|\bfx'|.
\end{equation}

We must bound  $|\bfw|,|\bfw'|,|\bfx|$, and $|\bfx'|$.
First, $|\bfw|=|\bfv\bfp|\leq|\bfv|$ and $|\bfw'|=|\bfz\bfp'|\leq|\bfz|=1$.  
Next, we know $\bfx+\bfy=\bfv\bfp$.  Take the inner product of both sides with $\bfu_0$  and
observe that  $\bfy$ and 
$\bfu_0$ are orthogonal
so
$\langle \bfx,\bfu_0\rangle=\langle \bfv\bfp,\bfu_0\rangle=\langle \bfv,\bfu_0\bfp\rangle$.
Then $|\bfx|=|\langle \bfv,\bfu_0\bfp\rangle|\leq |\bfv|\,|\bfu_0\bfp|=\mu^{1/2}|\bfv|$ by
Proposition~\ref{ortho}(iv).
Also, we know $\bfx'+\bfy'=\bfz\bfp'$.  Take the inner product of both sides with $\bfu_0$ as before.
This time,  $|\bfx'|\leq |\bfz|\,|\bfu_0\bfp'|=(\mu')^{1/2}$, again by Proposition~\ref{ortho}(iv).

 Substituting these four bounds into (\ref{ineq2}) gives
$$
  |\bfv\bfp\bfa\bfp'|\leq (\alpha+\beta\left(\mu\mu'\right)^{1/2})|\bfv|.
$$
By the Cauchy-Schwarz inequality,
\begin{eqnarray*}
\alpha+\beta\left(\mu\mu'\right)^{1/2}&=&{\alpha}^{1/2}{\alpha}^{1/2}+(\beta\mu)^{1/2}(\beta\mu')^{1/2}\\
                                   &\leq&(\alpha+\beta\mu)^{1/2}(\alpha+\beta\mu')^{1/2}
\end{eqnarray*}
which completes the proof.
\end{proof}

\begin{lemma}
\label{newbound}
Let $\bfa'$, $N$, $d$, $\alpha$, and $\beta$ be as above.  Take projection matrices $\bfp$ and $\bfp'$  for
$T$ and $T'\subseteq\{0,1,\ldots,N-1\}$ and set $\mu=|T|/N$, $\mu'=|T'|/N$.  Then for all for all  $\bfv\in\real^N$,
$$
|\bfv\,\bfp\bfa'\bfp'|\leq (\alpha+\beta\mu)^{1/2}(\alpha+\beta\mu')^{1/2}|\bfv|.
$$
\end{lemma}

\begin{proof}
We have
\begin{eqnarray*}
\bfp\bfa'\bfp' &=& \bfp\bfa\bfb\bfp'\\
               &=& \bfp\bfa(\bfb\bfp'\bfb^{-1})\bfb\\
               &=&  \bfp\bfa\bfp''\bfb,
\end{eqnarray*}
where $\bfp''=\bfb\bfp'\bfb^{-1}$.  It is easy to see that $\bfp''$ is the projection matrix for $F^{-1}[T']$.
Now $|F^{-1}[T']|/N=|T'|/N=\mu'$, so
by Lemma~\ref{eigenbound}, $|\bfv\,\bfp\bfa\bfp''|\leq (\alpha+\beta\mu)^{1/2}(\alpha+\beta\mu')^{1/2}|\bfv|$.
Thus, since $\bfb$ is a permutation matrix,
\begin{eqnarray*}
|\bfv\,\bfp\bfa'\bfp| &=& |\bfv\,\bfp\bfa\bfp''\bfb|\\
                                  &=& |\bfv\,\bfp\bfa\bfp''|\\
                                  &\leq& (\alpha+\beta\mu)^{1/2}(\alpha+\beta\mu')^{1/2}|\bfv|
\end{eqnarray*}
by the previous lemma.
\end{proof}

We come now to the main result of this section giving the construction of $\beta$-independent
random objects.

\begin{theorem}
\label{beta}
Let $\mathcal{G}'$  be a hybrid expander-permutation directed graph where the expander graph
has spectral radius $\beta$, $\Omega$ be  the set of all directed
$t$-walks  in $\mathcal{G'}$, and $\prob$ be the uniform probability measure on $\Omega$.
Then the projection functions  $U_i$ are $\beta$-independent random objects.
\end{theorem}

\begin{proof}
We need to show that
for all $T_i\subseteq V$, $ i\leq t$,
\begin{equation}
\label{bind}
  \prob\{\bigwedge_{ i\leq t} U_i\in T_i\}\leq \prod_{ i\leq t} (\alpha+\beta\mu_i),
\end{equation}
where $\mu_i=\prob\{U_i\in T_i\}$.

We claim that it is enough to show this in the special case where $T_0=T_t=V$.  In this case $\{U_0\in T_0\}=\{U_t\in T_t\}=\Omega$, $\mu_0=\mu_t=1$,
and $\alpha+\beta\mu_0=\alpha+\beta\mu_t=1$.  In effect, this eliminates constraints on the initial and terminal vertices of walks.
Then (assuming $t>2$) we may delete the initial and terminal vertices to obtain (\ref{bind}) for $(t-2)$-walks rather than $t$-walks.
That is, letting $\prob'$ be the uniform
measure on the set of $(t-2)$-walks,
$$
 \prob'\{\bigwedge_{1\leq i\leq t-1} U_i\in T_i\}=\prob\{\bigwedge_{ 0\leq i\leq t} U_i\in T_i\}=\prod_{1\leq i\leq t-1} (\alpha+\beta\mu_i).
$$

From Proposition~\ref{fundamental} we know that the terminal probability vector for $\{\bigwedge_{ 0\leq i\leq t} U_i\in T_i\}$ is given by
\begin{equation}
\label{probvec}
\bfv\,\bfp_0\bfa'\,\bfp_1\cdots\bfp_{t-2}\bfa'\,\bfp_{t-1}.
\end{equation}
where $\bfv=N^{-1}(1,1,\ldots,1)$.
We may rewrite this as
$$
\bfv\,(\bfp_0\bfa'\,\bfp_1)(\bfp_1\bfa'\,\bfp_2)\cdots(\bfp_{t-1}\bfa'\,\bfp_t)
$$
since $\bfp_i\bfp_i=\bfp_i$ for $1\leq i\leq t-1$.
By Lemma~\ref{newbound}, multiplication by $\bfp_i\bfa'\bfp_{i+1}$
changes the magnitude of a vector by at most a factor of $(\alpha+\beta\mu_i)^{1/2}(\alpha+\beta\mu_{i+1})^{1/2}$,
so (\ref{probvec}) is bounded in magnitude by
$$
   |\bfv|\prod_{ i\leq t-1} (\alpha+\beta\mu_i)^{1/2}(\alpha+\beta\mu_{i+1})^{1/2}
$$
which is equal to
$$
 |\bfv|\,(\alpha+\beta\mu_0)^{1/2}(\alpha+\beta\mu_t)^{1/2}\prod_{1\leq i\leq t-1} (\alpha+\beta\mu_i).
$$
By assumption, $\alpha+\beta\mu_0=\alpha+\beta\mu_t=1$, so we can further simplify this to
$$
 |\bfv|\,\prod_{ i\leq t} (\alpha+\beta\mu_i).
$$

Finally, compute the probability  of $\{\bigwedge_{ i\leq t} U_i\in T_i\}$ by summing the coordinates of its
terminal probability vector (\ref{probvec});  we do this by taking the inner product 
with  the vector $\bfu=(1,1,\ldots,1)^T$. The result is bounded in magnitude by
$$
 |\bfu|\,|\bfv|\,\prod_{ i\leq t} (\alpha+\beta\mu_i).
$$
But $|\bfv|=n^{-1/2}$ and $|\bfu|=n^{1/2}$, so we have established (\ref{bind}).
\end{proof}

\Section{Invertibility and One-Way Functions}
\label{oneway}

In this section we present definitions and notation concerning one-way functions.

For a set $S$, $\ident_S$ is the identity function on $S$.
Consider functions $F{:}\ S\rightarrow T$ and $G{:}\ T\rightarrow S$.
We say $G$ is a {\em right inverse} of $F$ if $F\compose G=\ident_T$ and
is a {\em left inverse} of $F$ if $G\compose F=\ident_S$.\footnote{Other
common terms for left inverse are {\em retract} and {\em retraction}.
Other common terms for right inverse are {\em coretraction } and {\em section}.}

 We say $F$ is a  {\em partial function} from  $S$ to $T$
(written $F{:}\ S\parfun T$) if it maps a subset of $S$
(the {\em domain} of $F$, denoted $\dom(F)$) onto a subset of
of $T$ (the {\em range} of $F$, denoted $\ran(F)$).
If $F{:}\ S\parfun T$ and $G{:}\ T\parfun U$, the {\em composition}
of $F$ and $G$, written $G\compose F$,  is a partial function
from $S$ to $U$ that maps $s$ to $u$ precisely when there is a $t$ such
that $F(s)=t$ and $G(t)=u$.
For  partial functions $F{:}\ S\parfun T$ and $G{:}\ T\parfun S$,
$G$ is a {\em partial right inverse} of $F$ if $F\compose G=\ident_{\ran(F)}$ and
is a {\em partial left inverse} of $F$ if $G\compose F=\ident_{\dom(F)}$.

Like functions, partial functions are injective if and only if
they have a (partial) left inverse.  Unlike functions,
which are surjective if and only if they have a
left inverse, 
partial functions always have a partial right inverse.\footnote{The existence of partial right inverses for
partial functions is equivalent to the Axiom of Choice.  This is almost the
same result as Axiom of Choice equivalent AC~5 in Rubin and Rubin \cite{rubin;rubin:1970},
which says that  the existence of right inverses for onto functions is equivalent to the Axiom of Choice.  }
 In cryptography,  invertibility of a  partial function refers to existence of an {\em efficiently computable right
inverse} of some kind.

Let $\varphi(n)$ be a
proposition concerning the natural numbers  $n\in\nat=\{0,1,2,\ldots\}$.
We say that $\varphi(n)$ holds {\em infinitely often}, and write $\varphi(n)\ $i.o.
or $(\exists^\infty n)\,\varphi(n)$, if
$  \forall m\, (\exists n\geq m)\, \varphi(n)$.
We say that $\varphi(n)$ holds {\em almost always}, and write $\varphi(n)\ $a.a.
or $(\forall^\infty n)\,\varphi(n)$, if
$
   \exists m\, (\forall n\geq m)\, \varphi(n).
$
$\exists^\infty$ and $\forall^\infty$ are dual quantifiers:
$\lnot(\exists^\infty n)\, \varphi(n)$ is equivalent
to $(\forall^\infty n)\,\lnot\varphi(n)$ and $\lnot(\forall^\infty n)\, \varphi(n)$ is equivalent
to $(\exists^\infty n)\,\lnot\varphi(n)$.  It is helpful to keep this in mind
when negating statements.  

A function $p{:}\ \nat\rightarrow\real$ is {\em negligible} if $p(n)=n^{-\omega(1)}$,
\ie, $(\forall c>0)\,(\forall^\infty n)\,(|p(n)|\leq n^{-c})$. 
(Some sources use the term {\em superpolynomially small} rather than {\em negligible}.)
In contrast, $p$ is {\em significant}
if $p(n)=n^{-O(1)}$, \ie, $(\exists c>0)\,(\forall^\infty n)\,(p(n)\geq n^{-c})$.
 The term {\em negligible} is standard in cryptographic theory;  the term {\em significant} is not.

Write $p(n)\approx q(n)$ if $|p(n)-q(n)|$ is negligible and $p(n)\gg q(n)$ if $p(n)-q(n)$ is significant.

Let $F$ be a function from $\{0,1\}^*$ to $\{0,1\}^*$.  Define the {\em auxiliary function} of $F$ to be $\olf(x)=( 1^{|x|},F(x)),$
where $1^{|x|}$ is the unary representation of the input length.
Auxiliary functions are convenient when defining weakly and strongly one-way
functions because they inform an adversary attempting to invert $F$ what the 
length of the preimage is.  It is reasonable to suppose that an adversary would
have this information.

We will take a probabilistic
approach where the arguments of $F$ are uniformly distributed random bit strings
$X_n\in\{0,1\}^n$ and  $Y_n=F(X_n)$.  $Y_n$
may not be  uniformly distributed;  indeed, it need not have a fixed length for a given $n$.

We require another modification: a probabilistic adversary.
Thus, the adversary attempting to find $X_n$ such that $\olf(X_n)=(1^n,Y_n)$ is a  partial function $\olg(1^n,Y_n,R_n)$
computable in time polynomial in $|(1^n,Y_n)|$, where  $R_n$ is a
random bit string independent of  $X_n$.  We may assume that $R_n$ is uniformly
distributed on $\{0,1\}^{q(n)}$ for some polynomial $q(n)$.  We will say that $\olg$ is
a {p.p.t.} function in this circumstance.

For each $n>0$ and polynomial $q(n)$, $(1^n,Y_n,R_n)$ is a random  vector and
$\prob_{(1^n,Y_n,R_n)}$, denoted for the
sake of simplicity as $\prob^n$, is an induced probability measure on $\Omega' =\{1\}^*\times\{0,1\}^*\times\{0,1\}^*$.  (Strictly speaking,
this  notation should specify the particular polynomial
$q(n)$ used, but this is cumbersome.)
In the definitions below, the function $I(1^n,y,r)=(1^n,y)$ acts as an identity function
when the adversary is the {p.p.t.} function $\olg$.

\begin{definition}
A polynomial time computable
function $F:\{0,1\}^*\rightarrow\{0,1\}^*$ is a {\em weakly one-way function} if 
$$
  (\exists \delta\gg 0)(\forall\,\mathrm{{p.p.t.}}\, \olg)\  (\prob^n\{\olf\compose \olg=I\}\leq 1-\delta\ \mathrm{a.a.}),
$$
or equivalently,
%$$
%  (\exists c> 0)(\forall\,\mathrm{{p.p.t.}}\, \olg)\  (\prob^n\{\olf\compose %\olg=I\}\leq 1-n^{-c}\ \mathrm{a.a.}).
%$$
there is a $c>0$ such that for all p.p.t.\ $\olg$, $\prob^n\{\olf\compose \olg=I\}\leq 1-n^{-c}\ \mathrm{a.a.}$

A polynomial time computable
function $F$ is a {\em strongly one-way function} if
$$
  (\forall\,\mathrm{{p.p.t.}}\, \olg)\  (\prob^n\{\olf\compose \olg=I\}\approx 0),
$$
or equivalently,
for all $c>0$ and all p.p.t.\ $\olg$, $\prob^n\{\olf\compose \olg=I\}\leq n^{-c}\ \mathrm{a.a.}$
If, in addition to either of the conditions above, $F$ is a length-preserving permutation (\ie,
it is a bijection on $\{0,1\}^n$ when restricted to strings of length $n$), we say it is a {\em weakly}
or {\em strongly one-way permutation}.
\end{definition}

\begin{remark}
The notation used in this paper differs from the notation used
in other sources.  A typical definition (similar to the one
found in \cite{goldreich:2001}) says that $F$ is weakly 
one-way if there is a $c>0$ such that for
every {p.p.t.} $G$ and all large $n$,
$$
   \prob\{G(1^n,F(x))\notin F^{-1}F(x)\}>n^{-c}
$$
where the probability is taken uniformly over $x\in\{0,1\}^n$ and
the random bits used by $G$.  This is equivalent to the definition 
above.
\end{remark}

Given $\olf$ and  $\olg$ as above, let
$$W=\prob^n\{\olf\compose \olg=I|I\}.$$
$W(1^n,y)$ is the probability that $\olg$ successfully
finds a length  $n$ inverse of $y$.  Taking
the conditional expectation with respect to $I$ averages over the random bit strings used by $\olg$.  Using this notation, we
may prove a
 standard amplification result which yields a useful tail bound for $W$ when $F$ is a weakly one-way
function \cite{goldreich:1990,goldreich:2001}.

\begin{proposition}
\label{wowprop}
Let $\olf$ be an auxiliary  function and $\olg$ be a randomized partial function with
$W=\prob^n\{\olf\compose \olg=I|I\}$.
\begin{itemize}
\renewcommand{\theenumi}{(\roman{enumi})}
\upitem{i}   Let $\olg'$ be the randomized
partial function computed
by independently computing  $\olg$ $k$ times  on a given input $(1^n,y)$
(with fresh random bits each time) and returning the first value
$x$ such that $\olf(x)=(1^n,y)$ (if there is such a 
value).  Let $W'=\prob^n\{\olf\compose \olg'=I|I\}$. Then
for any $0<\varepsilon<1$,
$$
  \prob_I^n\{W>\varepsilon \} = \prob_I^n\{W'>1-(1-\varepsilon)^k\}.
$$
\upitem{ii}
If  $F$ is a weakly one-way function, where for some significant $\delta$, every {p.p.t.} $\olg$ satisfies   $\prob^n\{\olf\compose \olg=I\}\leq 1-\delta$, then for every significant  $\varepsilon=\varepsilon(n)$,
$$
\prob^n_I\{W>\varepsilon \}\leq 1-\delta/2\ \mathrm{a.a.}
$$
\end{itemize}
\end{proposition}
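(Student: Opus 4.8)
For part (i) the plan is to first establish the pointwise identity $W'=1-(1-W)^k$ and then read off the conclusion from monotonicity. Fixing an input $(1^n,y)$: by construction $\olg'$ outputs a length-$n$ preimage of $y$ exactly when at least one of its $k$ independent invocations of $\olg$ does, each invocation succeeding with probability $W(1^n,y)$ (which is precisely what the conditional probability $W=\prob^n\{\olf\compose\olg=I\mid I\}$ records), so the chance all $k$ fail is $(1-W(1^n,y))^k$ and hence $W'(1^n,y)=1-(1-W(1^n,y))^k$. Since $w\mapsto 1-(1-w)^k$ is a strictly increasing bijection of $[0,1]$ onto itself, $W(1^n,y)>\varepsilon$ holds if and only if $W'(1^n,y)>1-(1-\varepsilon)^k$ does; thus $\{W>\varepsilon\}$ and $\{W'>1-(1-\varepsilon)^k\}$ are the same event and $\prob_I^n$ of each is the same number.

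For part (ii) the plan is an amplification-and-contradiction argument built on part (i): I would blow $\olg$ up into an adversary so successful that weak one-wayness of $F$ can only tolerate it if $\prob_I^n\{W>\varepsilon\}$ is small. Because $\varepsilon$ is significant there is $c>0$ with $\varepsilon(n)\geq n^{-c}$ a.a.; take $k=k(n)=\lceil n^{c+1}\rceil$, which is polynomially bounded and makes $\eta:=(1-\varepsilon)^k\leq e^{-\varepsilon k}\leq e^{-n}$ a.a., hence negligible. The corresponding $\olg'$ from part (i) is still p.p.t.\ (polynomially many p.p.t.\ calls to $\olg$ plus polynomial-time checks), so the hypothesis gives $\prob^n\{\olf\compose\olg'=I\}\leq 1-\delta$. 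Meanwhile, by the law of iterated expectation $\prob^n\{\olf\compose\olg'=I\}=\expec[W']$, and since $W'\geq 0$ everywhere while $W'>1-\eta$ on $\{W'>1-\eta\}=\{W>\varepsilon\}$ (part (i)), we get $\prob^n\{\olf\compose\olg'=I\}\geq(1-\eta)\,\prob_I^n\{W>\varepsilon\}$. Chaining the two inequalities, $\prob_I^n\{W>\varepsilon\}\leq(1-\delta)/(1-\eta)\leq 1-\delta+2\eta$ for all large $n$, so $(1-\delta/2)-\prob_I^n\{W>\varepsilon\}\geq\delta/2-2\eta$; as $\delta$ is significant and $\eta$ negligible this lower bound is significant, giving $\prob_I^n\{W>\varepsilon\}\ll 1-\delta/2$.

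The routine part is sizeable but genuinely routine; the one place I expect to have to be careful is the asymptotic bookkeeping at the end of part (ii): choosing the polynomial $k(n)$ so that it simultaneously drives $(1-\varepsilon(n))^k$ below every inverse polynomial and keeps $\olg'$ polynomial-time, and recording the elementary fact that a significant function minus a negligible function is again significant (so that $\delta/2-2\eta$ is significant). Everything else—identifying $W'=1-(1-W)^k$, applying the law of iterated expectation, and invoking part (i)—is mechanical.
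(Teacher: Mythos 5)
Your proposal is correct and follows essentially the same route as the paper: part (i) is the same pointwise observation that $W'=1-(1-W)^k$ together with monotonicity of $w\mapsto 1-(1-w)^k$, and part (ii) uses the same amplification with a polynomial $k$ driving $(1-\varepsilon)^k$ below $e^{-n}$ followed by Markov's inequality (which you re-derive directly via $\expec[W']\geq(1-\eta)\,\prob_I^n\{W'>1-\eta\}$) and the same asymptotic bookkeeping showing the gap $\delta/2-2\eta$ is significant. No gaps.
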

\begin{proof} 
Fix a value $(1^n,y)$ in the range of $\olf$.  
$W(1^n,y)>\varepsilon $ asserts that  the probability that  $\olf\compose\olg(1^n,y)=(1^n,y)$
is greater than $\varepsilon$ or,  equivalently, the probability that
$\olf\compose\olg(1^n,y)\neq (1^n,y)$ is  less than $1-\varepsilon$.
This happens  if and only if the probability that  $k$ independent computations of $\olf\compose\olg(1^n,y)$ fail to yield $(1^n,y)$  is less than $(1-\varepsilon)^k$;
and this happens if and only if  $W'(1^n,y)> 1-(1-\varepsilon)^k$.
  This proves part~(i).

Now suppose $\varepsilon$ is significant. There is an integer $d$ 
such that $\varepsilon>n^{-d}\ \mathrm{a.a.}$
 In part (i) take $k=n^{d+1}$ so almost always
\begin{eqnarray*}
\prob^n_I\{W>\varepsilon \}&=& \prob_I^n\{W'>1-(1-\varepsilon)^k\}\\
							&\leq& \prob_I^n\{W'>1-(1-n^{-d})^{n^{d+1}}\}\\
							&\leq& \prob_I^n\{W'>1-e^{-n}\}.
\end{eqnarray*}
By Markov's inequality and the weakly one-way assumption,
we have almost always
\begin{eqnarray*}
  \prob^n_I\{W' > 1-e^{-n}\} &\leq& \expec(W')/(1-e^{-n})\\
               &\leq& (1-\delta)/(1-e^{-n})\\
               &=&     1-\delta+e^{-n}(1-\delta)/(1-e^{-n})\\
               &\leq&  1-\delta+e^{-n}\\
               &\leq& 1-\delta/2,
\end{eqnarray*}
from which  (ii) follows.
\end{proof}

\begin{remark}
Rephrased, Proposition~\ref{wowprop}(ii) says that 
$\prob_I^n\{W\leq\varepsilon \}\geq \delta/2\ \mathrm{a.a.}$, i.e.,  under
the hypotheses of the proposition, $W$   has a uniformly significant tail probability.
\end{remark}

We often deal with partial functions defined only on arguments of certain prescribed lengths.
The following  technical result shows that under certain circumstances we can obtain weakly and strongly one-way functions
(and permutations) from hard-to-invert partial functions by filling in undefined values.

\begin{proposition}
\label{partialow}
Let $F{:}\{0,1\}^*\parfun\{0,1\}^*$  be polynomial time computable with domain $\bigcup_{m\geq 0} \{0,1\}^{\tau(m)}$, where
$\tau(m)$ is strictly increasing, computable in time polynomial in $m$, and for some $k>0$, $\tau(m+1)\leq\tau(m)^k$.
Define $F'(x)$ on  strings $x$ of length $n$ as follows. Let $m$  be the
largest integer such that $\tau(m)\leq n$,   put $x=x'z$ where $|x'|=\tau(m)$,
and set $F'(x)=F(x')z$ (the concatenation of $F(x')$ and $z$).
\begin{itemize}
\upitem{i}  
%If
%$
%  (\exists c>0)(\forall\,\mathrm{{p.p.t.}}\, \olg)\  (\prob^{\tau(m)}\{\olf\compose %\olg=I\}\leq 1-\tau(m)^{-c}\ \mathrm{a.a.}),
%$
%then $F'$ is a weakly one-way function.
%
If there is a $c>0$ such that for all p.p.t.\ $\olg$, $\prob^{\tau(m)}\{\olf\compose \olg=I\}\leq 1-\tau(m)^{-c}$ a.a., then $F'$ is a weakly one-way function.

\upitem{ii}  
%If
%$
%  (\forall c>0)(\forall\,\mathrm{{p.p.t.}}\, \olg)\  (\prob^{\tau(m)}\{\olf\compose %\olg=I\}\leq \tau(m)^{-c}),
%$
%$F'$ is a strongly one-way function.
If for all $c>0$ and all p.p.t. $\olg$, $\prob^{\tau(m)}\{\olf\compose \olg=I\}\leq \tau(m)^{-c}$ a.a., then $F'$ is strongly one-way.

\upitem{iii}  If for every $m\geq  0$, $F$  is a permutation (\ie, its restriction to each domain $\{0,1\}^{\tau(m)}$ is a bijection), then
$F'$ is also a permutation.
\end{itemize}
\end{proposition}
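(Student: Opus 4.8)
The plan is to derive all three parts from a single ``padding reduction'' converting an inverter for $F'$ into an inverter for $F$, with the suffix $z$ absorbing the discrepancy between the two input lengths. First I would record two routine facts. (a) $F'$ is polynomial-time computable: on input $x$ with $|x|=n$ one evaluates $\tau(0),\tau(1),\dots$ until the value exceeds $n$ --- at most $n+1$ evaluations, since $\tau$ is strictly increasing and integer-valued, each costing $\mathrm{poly}(n)$ --- to determine $m=m(n)$, then applies $F$ to the length-$\tau(m)$ prefix. (b) For $m=m(n)$ we have $\tau(m)\le n<\tau(m+1)\le\tau(m)^{k}$, so $n$ and $\tau(m)$ are polynomially related; moreover every integer $m$ arises as $m(n)$ for at least one and at most $\tau(m)^{k}$ values of $n$, namely $n\in[\tau(m),\tau(m+1))$.

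Part (iii) is then immediate: on $\{0,1\}^{n}$ the map $F'$ sends $x'z\mapsto F(x')z$ with $|x'|=\tau(m)$, which is a bijection of $\{0,1\}^{\tau(m)}\times\{0,1\}^{n-\tau(m)}$ onto itself (in particular length-preserving) whenever $F$ restricts to a bijection of $\{0,1\}^{\tau(m)}$.

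The core is the following reduction. Given a p.p.t.\ inverter $\olg'$ for $F'$, define a p.p.t.\ inverter $\olg$ for $F$: on input $(1^{\tau(m)},y,r)$, $\olg$ loops over all $n\in[\tau(m),\tau(m+1))$ --- at most $\tau(m)^{k}$ values, each with $n=\mathrm{poly}(\tau(m))$ by (b), so the loop is affordable --- and for each $n$ it carves from $r$ a fresh uniform block $z\in\{0,1\}^{n-\tau(m)}$ and a fresh uniform string $r'$, runs $\olg'(1^{n},yz,r')$, and if the output $\tilde x$ has length $n$ and $F'(\tilde x)=yz$ it returns the length-$\tau(m)$ prefix $\tilde x'$ of $\tilde x$; otherwise it moves on, returning a fixed default if no iteration produces a valid preimage. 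Two points make the reduction faithful. First, because $F'(x'z)=F(x')z$, feeding $\olg'$ the triple $(1^{n},yz,r')$ with $y=F(x')$ and $x'$ uniform on $\{0,1\}^{\tau(m)}$, $z$ uniform, $r'$ uniform exactly reproduces the distribution $\prob^{n}$ from the definition of inverting $F'$. Second --- and this is the only delicate point --- whenever $\olg'$ returns a valid length-$n$ $F'$-preimage $\tilde x=\tilde x'\tilde z$ of $yz$, one has $F(\tilde x')\tilde z=yz$ with $|\tilde z|=n-\tau(m)=|z|$, which \emph{forces} $|F(\tilde x')|=|y|$ and hence $F(\tilde x')=y$; so $\tilde x'$ is a genuine length-$\tau(m)$ $F$-preimage of $y$, and this uses no length-regularity of $F$, only that the two suffixes have the same length. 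Therefore $\prob^{\tau(m)}\{\olf\compose\olg=I\}\ge\prob^{n}\{\olf'\compose\olg'=I\}$ for every $n\in[\tau(m),\tau(m+1))$.

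Parts (i) and (ii) then follow by contraposition. If $F'$ is not weakly (resp.\ strongly) one-way, some p.p.t.\ $\olg'$ achieves $\prob^{n}\{\olf'\compose\olg'=I\}>1-n^{-c'}$ (resp.\ $>n^{-c'}$) for infinitely many $n$; since an infinite set of $n$'s yields infinitely many distinct values $m(n)$, the inverter $\olg$ above achieves $\prob^{\tau(m)}\{\olf\compose\olg=I\}>1-n^{-c'}$ (resp.\ $>n^{-c'}$) for infinitely many $m$, with $n=n(m)$ a witnessing length. Converting the $n$-bound to a $\tau(m)$-bound via (b) --- $1-n^{-c'}\ge1-\tau(m)^{-c'}$ since $n\ge\tau(m)$ for part (i), and $n^{-c'}>\tau(m)^{-c'k}$ since $n<\tau(m)^{k}$ for part (ii) --- contradicts the hypothesis taken with $c=c'$ in part (i) and with $c=c'k$ in part (ii). I expect the main obstacle to be precisely this length bookkeeping: arranging the suffix so that the challenge distribution becomes exactly $\prob^{n}$, verifying that a valid $F'$-preimage always yields a valid $F$-preimage without assuming $F$ preserves lengths, and compressing the non-uniform choice of a ``good'' length into one uniform p.p.t.\ machine via the polynomially bounded loop.
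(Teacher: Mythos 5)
Your proposal is correct and follows essentially the same route as the paper: the same padding reduction that, on input $(1^{\tau(m)},y)$, loops over all $n\in[\tau(m),\tau(m+1))$, appends a random suffix $z$, queries $\olg'$ on $(1^n,yz)$, and verifies the returned prefix, with the same length bookkeeping ($n\geq\tau(m)$ for part (i), $n<\tau(m)^k$ for part (ii)). The only difference is that you argue by contraposition where the paper argues directly, and you spell out the distribution-matching and length-forcing details that the paper leaves implicit; neither changes the substance.
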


\begin{proof}  Part (iii) is obvious.

For parts (i) and (ii)
we derive an upper bound for $\prob^n\{\olf'\compose \olg'=I\}$, where  $\olg'$ is an arbitrary {p.p.t.} function.

From $\olg'$  construct a {p.p.t.} function $\olg$ that attempts to invert $\olf$ as follows on input $(1^{\tau(m)},y)$. It takes
successive values of  $n$ 
in the interval 
$\tau(m)\leq n< \tau(m+1)$, each time choosing a random
bit string $z$ of length $n-\tau(m)$ and applying
$\olg'$ to  $(1^n, yz)$; if the result is of the form
$xz$ and $F(x)=y$, it  returns the value $x$ and terminates the computation.

In (i) of the proposition, there is a $c>0$ such that
$$
  \prob^{\tau(m)}\{\olf\compose \olg=I\}\leq 1-\tau(m)^{-c}\ \mathrm{a.a.}
$$
$\olg$ finds an inverse image $x$ (with respect to $\olf$) of $(1^{\tau(m)},y)$
only if there is an $n$ in the range $\tau(m)\leq n<\tau(m+1)$ such that
$\olg'$ finds an inverse image $xz$ of $(1^n,yz)$.
But $1-\tau(m)^{-c}\leq 1-n^{-c}$, so
$$
  \prob^{n}\{\olf'\compose \olg'=I\}\leq 1-n^{-c}\ \mathrm{a.a.}
$$
Therefore, $F'$ is a weakly one-way function.

In part (ii) of the proposition, for all $c>0$ and all {p.p.t.} $\olg$,
$$
  \prob^{\tau(m)}\{\olf\compose \olg=I\}\leq \tau(m)^{-c}\ \mathrm{a.a.}
$$
Again, $\olg$ finds an inverse $x$ of $(1^{\tau(m)},y)$
only if there is an $n$ in the range $\tau(m)\leq n<\tau(m+1)$ such that
$\olg'$ finds an inverse $xz$ of $(1^n,yz)$.
But $n<\tau(m+1)\leq \tau(m)^k\ \mathrm{a.a.}$, so
$$
  \prob^{n}\{\olf'\compose \olg'=I\}\leq n^{-c/k}\ \mathrm{a.a.}
$$
and, therefore, $F'$ is a strongly one-way function.
\end{proof}

\Section{From Weakly to Strongly One-Way.}
\label{application}

We now show  that the existence of a weakly one-way function implies
the existence of a strongly one-way function.  As noted in the introduction,
the first published proof \cite{goldreich:2001} implicitly defines
a reduction $({\mathcal R},{\mathcal R^*})$ between function inversion problems.  
Let us make this precise.

Recall that from $F$ we define $F'$ by
$$
  F'(x_0x_1\cdots x_{t-1})=(F(x_0),F(x_1),\ldots,F(x_{t-1}))
$$
where $t=t(n)$ is a polynomially bounded function and $|x_i|=n$ for all $i$.
This defines $F'$ only for inputs $x'=x_0x_1\cdots x_{t-1}$ of length $n\,t(n)$,
but the Proposition~\ref{partialow} allows us to extend $F'$ to a total function.  It is convenient to view 
${\mathcal R}$ as a polynomial time oracle Turing machine  that computes
the  auxiliary function $\olf'$ by making queries to evaluate $\olf$
at the values $x_0,x_1,\ldots ,x_{t-1}$.  Similarly, 
${\mathcal R^*}$ is a probabilistic polynomial time oracle Turing machine  that computes 
$\olg$ by making queries to evaluate the function $\olf$ and the randomized
function $\olg'$ at various values.

We now show how the 
 probability inequality in Theorem~\ref{mainthm}(i)
 figures in Goldreich's 
proof   \cite{goldreich:2001}.

\begin{theorem}
\label{weaktostrong}
Suppose $F$ is a weakly one-way function, so that for some integer $c>0$,
$$
  (\forall\,\mathrm{{p.p.t.}}\, \olg)\  (\prob^n\{\olf\compose \olg=I\}\leq 1-n^{-c}\ \mathrm{a.a.}).
$$
Then $F'$, defined as above with $t=n^{c+1}$, is a strongly one-way function.
\end{theorem}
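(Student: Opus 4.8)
The plan is to reduce, via Proposition~\ref{partialow}(ii), to showing that the partial function $F'$ (defined only on inputs of length $\tau(m)=m\,t(m)$) is hard to invert, and to establish that by contradiction using Theorem~\ref{mainthm}(i). So suppose $F'$ is not strongly one-way; then there are a p.p.t.\ $\olg'$ and a constant $c'>0$ such that, writing $n$ for the block size and $N=\tau(n)=n\,t(n)=n^{c+2}$, we have $\prob^{N}\{\olf'\compose\olg'=I\}>N^{-c'}$ for infinitely many $n$. Fix such an $n$. View the success probability of $\olg'$ (averaged over its coins) as a random variable $Z$ on $(\{0,1\}^n)^t$ with $0\le Z\le 1$ and $\expec[Z]=\prob^N\{\olf'\compose\olg'=I\}$; the block projections $U_i(x_0\cdots x_{t-1})=x_i$ are i.i.d.\ uniform on $\{0,1\}^n$; and, with $W_i=\expec[Z\mid U_i]$ and $W=(W_0+\cdots+W_{t-1})/t$, the quantity $W_i(x)$ is the probability that $\olg'$ inverts a $t$-tuple whose $i$-th block is $F(x)$ and whose remaining blocks are images of fresh uniform strings.

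The first step is to extract from Theorem~\ref{mainthm}(i) a \emph{lower} bound on $\prob_U\{W>\varepsilon\}$. Take $\varepsilon=\varepsilon(n):=\tfrac{1}{2t}\,N^{-c'}$, which is significant as a function of $n$ and lies in $(0,1)$; then $t\varepsilon=\tfrac12 N^{-c'}<\tfrac12\expec[Z]$, so the bound $\expec[Z]\le\prob_U\{W>\varepsilon\}^t+t\varepsilon$ forces $\prob_U\{W>\varepsilon\}^t>\tfrac12 N^{-c'}$ and hence
$$
 \prob_U\{W>\varepsilon\}\;>\;\bigl(N^{-c'}/2\bigr)^{1/t}\;\ge\;1-\frac{\ln\!\bigl(2N^{c'}\bigr)}{t}\;\ge\;1-\tfrac12 n^{-c}
$$
for all large $n$, using $t=n^{c+1}$ and $\ln(2N^{c'})=O(\log n)$.

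The second step is to derive a contradictory \emph{upper} bound on the same quantity from the weak one-wayness of $F$. From $\olg'$ build a p.p.t.\ adversary $\olg$ for $F$ as follows: on input $(1^n,y)$, pick $i<t$ and blocks $x_j\in\{0,1\}^n$ ($j\ne i$) uniformly, run $\olg'$ on the $t$-tuple having $y$ in slot $i$ and $F(x_j)$ in slot $j$, and return the $i$-th block of $\olg'$'s answer (accepted only if it is a genuine preimage of $y$). For every $x$, the success probability of $\olg$ on $y=F(x)$ is at least $\tfrac1t\sum_i W_i(x)=W(x)$: the embedded tuple has exactly the conditional distribution defining $W_i$, and whenever $\olg'$ inverts it, its $i$-th block inverts slot $i$, i.e.\ $y$. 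Writing $W_\olg=\prob^n\{\olf\compose\olg=I\mid I\}$ for the success-probability function of $\olg$ (the ``$W$'' of Proposition~\ref{wowprop}), this says $W_\olg(1^n,F(x))\ge W(x)$ for all $x$, whence $\prob_U\{W>\varepsilon\}\le\prob^n_I\{W_\olg>\varepsilon\}$. Since $F$ is weakly one-way (every p.p.t.\ attacker succeeds with probability at most $1-n^{-c}$ a.a.) and both $\varepsilon$ and $n^{-c}$ are significant, Proposition~\ref{wowprop}(ii) applied to $\olg$ gives $\prob^n_I\{W_\olg>\varepsilon\}\ll 1-\tfrac12 n^{-c}$, hence $\prob_U\{W>\varepsilon\}<1-\tfrac12 n^{-c}$ a.a. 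This contradicts the first step for large $n$ in the infinite set. Therefore no such $\olg'$, $c'$ exist, and Proposition~\ref{partialow}(ii) yields that $F'$ is strongly one-way.

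I expect the crux to be reconciling the two opposing demands on $\varepsilon$: amplifying $\olg$ within polynomial time (which happens implicitly inside Proposition~\ref{wowprop}(ii)) forces $1/\varepsilon$ to be polynomial, whereas the correction term $t\varepsilon$ of Theorem~\ref{mainthm}(i) must still be swamped by $\expec[Z]$. Both hold precisely because $t=n^{c+1}$ is taken larger than $n^{c}\cdot O(\log n)$: the $t$-th root $(N^{-c'}/2)^{1/t}$ then sits within $o(n^{-c})$ of $1$, so the (necessarily significant) choice $\varepsilon=N^{-c'}/(2t)$ simultaneously keeps $t\varepsilon\le\tfrac12\expec[Z]$ and keeps $\prob_U\{W>\varepsilon\}$ above $1-\tfrac12 n^{-c}$. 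The remaining points to verify are routine: that a single run of $\olg$ on $F(x)$ indeed succeeds with probability $\ge W(x)$ (correct conditional distribution of the embedded tuple; full inversion implies slot-$i$ inversion; randomized over the slot $i$), and that $\tau(m)=m\,t(m)$ meets the hypotheses of Proposition~\ref{partialow} (strictly increasing, with $\tau(m+1)\le\tau(m)^{O(1)}$).
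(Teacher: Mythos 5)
Your proposal is correct and follows essentially the same route as the paper: the same reduction $\mathcal{R}^*$ embedding $y$ in a random slot of a $t$-tuple, the same identification of $\prob^n\{\olf\compose\olg=I\mid I\}$ with the average $W$ of the conditional expectations $W_i=\expec[Z\mid U_i]$, and the same combination of Theorem~\ref{mainthm}(i) with Proposition~\ref{wowprop}(ii) and Proposition~\ref{partialow}(ii). The only difference is presentational: the paper argues directly (for each target exponent $d$ it sets $\varepsilon=n^{-d}t^{-d-1}/2$ so that $t\varepsilon=(nt)^{-d}/2$ and shows $\expec[Z]\leq(nt)^{-d}$ a.a.), whereas you argue by contradiction with $\varepsilon$ calibrated to the assumed success probability; the two are logically equivalent and the quantitative content is identical.
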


\begin{proof}
Take an arbitrary {p.p.t.} $\olg'$.
By Proposition~\ref{partialow}(ii) it is enough to show that for  all $d>0$,
$$\prob^{nt}\{\olf'\compose\olg'=I\}\leq (nt)^{-d} \ \mathrm{a.a.}$$
Let $Z=\prob^{nt}\{\olf'\compose\olg'=I|I\}$ so $\expec(Z)=\prob^{nt}\{\olf'\compose\olg'=I\}$ by the law of iterated expectations.

${\mathcal R^*}$ computes a randomized function $\olg$, which attempts to find the inverse of $\olf$ 
at $(1^n,y)$, as follows.
\begin{enumerate}
\item ${\mathcal R^*}$ forms  ${\mathbf y}=(y_0,y_1,\ldots,y_{t-1})$ by 
choosing a random $i<t$ and
setting $y_i=y$; then generating random $x'_j\in\{0,1\}^n$ for each $j\not=i$ and putting  $y_j=F(x'_j)$.  (Here it queries the  $\olf$ oracle).

\item It queries the $\olg'$ oracle on  $(1^{nt}, {\mathbf y})$ and
receives an answer $x_0x_1\cdots x_{t-1}$,
where each $x_j$ is of length $n$.

\item It checks that   $F(x_j)=y_j$  for all $j<t$ (again, by querying the
$\olf$ oracle) and,
if so, returns $x_i$;  otherwise, the function is undefined.
\end{enumerate}
Clearly, when this procedure returns a value $x_i$, $(1^n,y)=\olf(x_i)$.

Define $U_i(1^{nt},{\mathbf y})=(1^n,y_i)$.  The random objects $U_i$ are {i.i.d.}  The probability
that $\olg'$ successfully finds an inverse for $(1^{nt},{\mathbf y})$ given that $y=y_i$ is $W_i(y)$, where
$W_i=\expec(Z|U_i)$.  Therefore, $\prob^n\{\olf\compose\olg=I|I\}$ is 
$W=(\sum_{i<t} W_i)/t$.  It is at this point we bypass many of the details in the original proof
and invoke Theorem~\ref{mainthm}(i):  $$\expec(Z)\leq\prob_I^n\{W>\varepsilon \}^t+t\varepsilon.$$
Let $d$ be an arbitrary positive integer and put  $\varepsilon=n^{-d}t^{-d-1}/2$.
By Proposition~\ref{wowprop}, $\prob_I^n\{W>\varepsilon \}\leq 1-n^{-c}/2\ \mathrm{a.a.}$
Thus, almost always
\begin{eqnarray*}
\prob_I^n\{W>\varepsilon \}^t&\leq& (1-n^{-c}/2)^{n^{c+1}}\\
                    &\leq& (e^{-n^{-c}/2})^{n^{c+1}}\\
                    &=&    e^{-n/2}\\
                    &\leq& (nt)^{-d}/2.
\end{eqnarray*}
Also, $\varepsilon t=(nt)^{-d}/2$.
Consequently, $\prob^{nt}\{\olf'\compose\olg'=I\}=\expec(Z)\leq (nt)^{-d}\ \mathrm{a.a.}$ 
\end{proof}

Using Proposition~\ref{partialow}, we have the following corollary.

\begin{corollary}~
\begin{itemize}
\upitem{i} If weakly one-way functions exist, then strongly one-way functions exist.

\upitem{ii} If weakly one-way permutations exist, then strongly one-way permutations exist.
\end{itemize}
\end{corollary}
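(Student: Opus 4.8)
The plan is to read the Corollary off from Theorem~\ref{weaktostrong} together with Proposition~\ref{partialow}; essentially all the work is already done, and what remains is to line up the hypotheses.

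For part~(i), suppose $F$ is a weakly one-way function. By the second, equivalent form of the definition there is an integer $c>0$ such that for every {p.p.t.}\ $\olg$ we have $\prob^n\{\olf\compose\olg=I\}\leq 1-n^{-c}$ a.a., which is exactly the hypothesis of Theorem~\ref{weaktostrong}. Applying that theorem with $t=t(n)=n^{c+1}$ produces a function $F'$ (the direct power of $F$ padded out to a total function via Proposition~\ref{partialow}) that is strongly one-way, so strongly one-way functions exist.

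For part~(ii), suppose instead that $F$ is a weakly one-way permutation. Then $F$ is in particular a weakly one-way function, so Theorem~\ref{weaktostrong} applies and the function $F'$ obtained with $t=n^{c+1}$ is strongly one-way; it remains only to observe that this $F'$ is a permutation. First, the unpadded direct power $x_0x_1\cdots x_{t-1}\mapsto(F(x_0),\ldots,F(x_{t-1}))$ is a bijection on $\{0,1\}^{nt}$ whenever $F$ is a bijection on $\{0,1\}^n$, so, viewed as a partial function, it is a permutation on each of the lengths $\tau(m)=m\,t(m)=m^{c+2}$ in its domain. Second, the padding step used to extend it is precisely the construction of Proposition~\ref{partialow}, and by part~(iii) of that proposition it sends a partial permutation to a total permutation. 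Hence $F'$ is a strongly one-way permutation.

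The only points needing a moment's care are (a) matching the two quantitative forms of ``weakly one-way'' (immediate: $\delta\gg 0$ gives $\delta(n)\geq n^{-c}$ a.a.\ for some integer $c$), and (b) checking that the length function $\tau(m)=m^{c+2}$ meets the regularity conditions of Proposition~\ref{partialow} (strictly increasing, polynomial-time computable, and $\tau(m+1)\leq\tau(m)^k$ for a suitable constant $k$), which is equally routine. There is no substantive obstacle; the Corollary is a repackaging of the two preceding results.
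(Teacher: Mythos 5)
Your proposal is correct and follows exactly the route the paper intends: the paper gives no separate proof, simply noting that the corollary follows from Theorem~\ref{weaktostrong} together with Proposition~\ref{partialow}, which is precisely the assembly you carry out (with part~(ii) resting on Proposition~\ref{partialow}(iii) and the observation that the direct power of a length-preserving bijection is a bijection). Nothing further is needed.
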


\Section{A Security Preserving Reduction for One-Way Permutations.}
\label{security}

In this section we restrict our attention to one-way permutations.  Since
$|F(x)|=|x|$, it is not necessary to 
to use an auxiliary function $\olf(x)=(1^{|x|},F(x))$ -- an adversary can infer this
information from $|F(x)|$.

Definitions of  {\em security preserving reduction} differ on details
\cite{goldreich:1990,crescenzo:99,lin:2005,goldreich:2011,goldreich:2011a},
but follow the same general pattern.  $(\mathcal{R,R^*})$ is a reduction
from a cryptographic primitive $F$ to a cryptographic primitive  $G$, as in the
previous section, but in addition, when $\mathcal{R^*}$ computes $F'$ from
$G'$, 
the security of $F'$ against $G'$ is at least as great as
 the security of $F$ against $G$.
For  one-way permutations, security means resilience against inversion.
To be precise, if $G$ is a  randomized function  computed in time $T(n)$  (not necessarily a polynomial)
and $\varepsilon(n)$ is  the probability
that $G$  inverts $F$, the  {\em security} of $F$ against  $G$ is $S(n)=T(n)/\varepsilon(n)$.  
This is essentially the expected time to  invert some element with respect  to  $F$  
by  applying  $G$ independently to random elements
in the range of $F$.    
We will write
$S'(n)\succeq S(n)$ ($S'(n)$ {\em dominates} $S(n)$) if there are positive constants $k$ and $c$ such
that $S'(cn)\geq S(n)/n^k$ a.a., and   $S'(n)\asymp S(n)$ ($S'(n)$ and $S(n)$
are {\em of the same order})
 if $S'(n)\succeq S(n)$ and
$S(n)\succeq S'(n)$.

The reduction used in Theorem~\ref{weaktostrong} is not
security preserving because
$\mathcal R^*$ makes just one query to the
$G'$ oracle and it is of length $n^{c+2}$.  If the security of $F'$ against $G'$ is $S'(n)$,  then the security of $F$ against $G$ is 
of the same order as  $S'(n^{c+2})$, which is not of the same order as $S'(n)$ in general.

We apply Theorem~\ref{mainthm}(ii) to show that the reduction of
Goldreich \etal\ \cite{goldreich:1990} is security preserving, in fact, that
 $S(n)$ is essentially  $S'(n+\omega(\log n))$, which is of the same
 order as $S'(n)$.
This reduction applies just to one-way permutations rather than arbitrary one-way functions.
It uses the
 set of $t$-walks in a hybrid expander-permutation directed graph
in place of  a direct power  and $\beta$-independence  in place of independence.

The expander graphs 
used for this reduction must be from a fully explicit family, defined as  follows.  
\begin{definition}
Let  ${\mathcal G}_m, m\geq 0,$ be 
a family of $d$-regular graphs where ${\mathcal G}_m$ has
vertex set $V_m=\{0,1,\ldots,N_m-1\}$  with
 $N_0<N_1<N_2<\cdots$. 
A {\em rotation function}  $R(N,u,j)$ for this family is a partial function satisfying the following
conditions.  
\begin{enumerate}
\item $R(N,u,j)$ is defined if and only if for some $m$, $N=N_m$,
$0\leq u<N_m$, and $0\leq j<d$.

\item  
For each vertex  there is a linear order on the $d$ adjacent vertices such that $R(N_m,u,j)=(v,k)$ holds precisely when
$v$ is the $j$-th vertex adjacent to $u$ and $u$ is the $k$-th vertex adjacent to $v$.
 \end{enumerate}
A family of $d$-regular graphs is {\em fully explicit} if it
has a polynomial time computable rotation function.  

If  ${\mathcal G}_m, m\geq 0,$
has a rotation function $R(N,u,j)$ such that
for every edge $\{u,v\}$ in ${\mathcal G}_m$ there
is  a $j$ such that $R(N_m,u,j)=(v,j)$, we let $\kappa(u,v)=j$.
Thus, $\kappa$ defines an {\em edge coloring}: $k$ assigns a color to 
each edge such that at each vertex incident edges are uniquely colored.
When this occurs for a polynomial time computable $R(N,u,j)$ we
have a {\em fully explicit edge coloring}.
\end{definition}  

There is an extensive  literature on the construction of fully explicit families of $(N,d,\alpha)$-expander graphs 
\cite{margulis:1973,gabber:1981,jimbo:1987,reingold;vadhan;wigderson:2002,alon;schwartz;shapira:2008,benaroya;tashma:2008}.

\begin{remark}
Most applications involving fully explicit expander graph 
families require that the gap between 
$N_m$ and $N_{m+1}$ not grow too quickly.
We require more, \viz, that
$N_0,N_1,N_2,\ldots $  be a smoothly growing sequence of powers of two 
with $N_m=2^{cm}$ for some constant $c$,   and that $d$  be a fixed 
power of two, say $2^e$.
For the remainder of this section we will assume that
${\mathcal G}_m, m\geq 0,$ is a fully explicit
$(N,d,\alpha)$-expander graph family satisfying these conditions, with $d$ fixed and $\alpha<1$. 
Hence, ${\mathcal G}_m=(\{0,1\}^n,E_m)$
 with $n=cm$.  As a notational convenience, we  will take $E=\bigcup_{m\geq 0}E_m$ and
write ${\mathcal G}_m=(\{0,1\}^n,E)$ rather ${\mathcal G}_m=(\{0,1\}^n,E_m)$.
\end{remark}

One example of an explicit family of expander graphs satisfying these conditions is the affine torus expander graph family  of Margulis \cite{margulis:1973}.
Gabber and Galil \cite{gabber:1981} established an upper
bound for the second largest eigenvalue magnitude of graphs
in this family, later improved by  Jimbo and Maruoka \cite{jimbo:1987}.  Using these results, we may take $n=2m$ (so $N_m=2^{2m}$),
$d=8=2^3$, and $\alpha=5\sqrt{2}/8= 0.88388\cdots$.  Other constructions may give a better 
bound for $\alpha$.  The results in \cite{goldreich:1990} require
that $\alpha\leq 1/2$, but the approach here
based on $\beta$-independence works for any fixed bound less than~1. 
 
Goldreich \etal\ \cite{goldreich:1990} require an expander graph 
family which has a fully explicit edge coloring  (but use different terminology).
Many explicit expander graph constructions do, in fact,
 have a fully explicit edge coloring, but we will extend the proof in \cite{goldreich:1990}  so that we may 
 dispense with this assumption.

Let $t=t(n)$ be a polynomially bounded, strictly increasing function.  
We first describe the transformation ${\mathcal R}$ taking
$F$, a weakly one-way permutation, to $F'$, a slightly harder to invert permutation.

Take  $E'=F\compose E$.  This gives a family  of directed graphs ${\mathcal G}'_m=(\{0,1\}^n,E')$.
Note that for each directed  edge $(u,w)$ in ${\mathcal G}'_m$, 
there is a unique $v$ such that $\{u,v\}\in E$ and $F(v)=w$.
This suggests two ways to color the directed edges of ${\mathcal G}_m'$.
If $R(N_m,u,j)=(v,k)$, we have the coloring
$\kappa(u,w)=j$ and the coloring $\kappa'(u,w)=k$.
Thus, $\kappa$  is an explicit out-edge coloring in the sense that at every vertex $u$ of ${\mathcal G}_m'$, the $d$  out-edges are uniquely colored; and $\kappa'$  is an explicit in-edge coloring in the sense that at every vertex $w$, the
$d$  in-edges are uniquely colored.  

Let ${\mathbf x}=(x_0,x_1,\ldots,x_t)$ be a directed $t$-walk in ${\mathcal G}_m'$. 
The {\em forward representation}  of $\mathbf x$ is 
\begin{eqnarray*}
\varphi({\mathbf x})&=& (x_0,\kappa(x_0,x_1),\kappa(x_1,x_2),\ldots,\kappa(x_{t-1},x_t)).
\end{eqnarray*}
In effect, we regard ${\mathcal G}_m'$ and its
out-edge-coloring as a finite automaton with alphabet
$\{0,1,\ldots,d-1\}$, and  $\kappa(x_0,x_1)\kappa(x_1,x_2)\cdots\kappa(x_{t-1},x_t)$  as
the unique string  which, when read, causes this finite automaton to   transition
through the states $x_0,x_1,\ldots,x_t$.  This walk representation uses 
fewer bits than just listing  vertices.
Clearly, $\varphi$ is a bijection from the set of directed $t$-walks in $\mathcal G'$
to $V\times \{0,1,\ldots,d-1\}^t$.   We will identify $V\times \{0,1,\ldots,d-1\}^t$
with $\{0,1\}^{n+te}$. 
Since $R$ and $F$ are polynomial time computable, so are $\varphi$ and $\varphi^{-1}$.  

The {\em reverse representation} of ${\mathbf x}$ is
\begin{eqnarray*}
\rho({\mathbf x})&=&  (x_t,\kappa'(x_{t-1},x_t),\kappa'(x_{t-2},x_{t-1}),\ldots,\kappa'(x_0,x_1)).
\end{eqnarray*}
View this as taking
${\mathcal G}_m'$ together with its in-edge-coloring,  reversing the  edge directions to 
form another  finite automaton, and specifying a succinct walk representation as before.
As with $\varphi$, $\rho$ is a bijection from the set of directed $t$-walks in $\mathcal G'$
to $\{0,1\}^{n+te}$.  It is easy to see that $\rho$  is polynomial time computable, but since
$F$ is weakly one-way,  it does not follow that $\rho^{-1}$ is polynomial time computable.

 We now describe how
 ${\mathcal R}$ computes $F'$, a permutation on $\{0,1\}^{n+te}$, from $F$, a permutation
 on $\{0,1\}^n$, where $n=cm$.
For each $t$-walk $\bf x$ in ${\mathcal G}'_m$, $F'$
maps $\varphi({\mathbf x})$ to $\rho({\mathbf x})$.  
In other words, $F'=\rho\compose\varphi^{-1}$.
To compute $F'(x_0,k_1,\ldots,k_t)$, $\mathcal R$ begins
at vertex $x_0$ in ${\mathcal G}_m$, then repeatedly
follows the $k_i$-th edge with respect to the coloring $\kappa$  from the current vertex
$x_i$ and applies $F$
to jump to a new vertex $x_{i+1}$.  In this way, it computes a $t$-path ${\mathbf x}=(x_0,x_1,\ldots,x_t)$.
As it traverses each edge  in ${\mathcal G}'_m$ dictated the out-edge coloring $k_i$ (with respect
to $\kappa$),  it computes the corresponding in-edge coloring {(with respect to $\kappa'$),
then at the terminus $x_t$ reverses the in-edge color sequence to output $\rho({\mathbf x})$.
Clearly,
$F'$   is a permutation.  By this 
definition, $F'$ is defined only on strings of length
 $cm+te$ for $m\geq 0$, but we may use
Proposition~\ref{partialow} to extend $F'$ so that it is defined
on strings of any length.

\begin{lemma}
\label{securelem}
From a weakly one-way permutation $F$, construct $F'$ as above with
polynomially bounded $t=t(n)$.  Suppose  $\delta=\delta(n)$ is significant and  that for
all p.p.t.\ $G$, $$\prob^n\{F\compose G=I\}\ \leq 1-\delta\ \mathrm{a.a.}$$  Then
the following hold.
\begin{itemize}
\upitem{i}  For every p.p.t.\ $G'$, 
$$
  \prob^{n+te}\{F'\compose G'=I\} \leq (1-\beta\delta(n) /2)^t\ \mathrm{a.a.}
$$
\upitem{ii} If $t\geq  7/\beta$, then for every p.p.t.\ $G'$,
$$
   \prob^{n+te}\{F'\compose G'=I\} \leq \max(1-2\delta(n) , 1/2)\ \mathrm{a.a.}
$$ 
\upitem{iii} If $\delta(n)\geq 1/2$ a.a. and $t=\omega(\log n)$, then for every p.p.t.\ $G'$,
$$
   \prob^{n}\{F'\compose G'=I\} \approx 0.
$$ 
\end{itemize}

\end{lemma}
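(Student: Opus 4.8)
The plan is to feed Theorem~\ref{mainthm}(ii) and Proposition~\ref{wowprop} the random objects supplied by Theorem~\ref{beta}. Fix a {p.p.t.}\ $G'$ and recall $F'=\rho\compose\varphi^{-1}$, so at the relevant input length $n+te$ a uniform argument of $F'$ is $\varphi(\mathbf{x})$ for a uniformly random directed $t$-walk $\mathbf{x}=(x_0,\ldots,x_t)$ in $\mathcal{G}'_m$, and $F'(\varphi(\mathbf{x}))=\rho(\mathbf{x})$. I would put $Z(\mathbf{x})=\prob_R\{F'(G'(\rho(\mathbf{x}),R))=\rho(\mathbf{x})\}$, so $0\le Z\le 1$ and, by the law of iterated expectation, $\expec[Z]=\prob^{n+te}\{F'\compose G'=I\}$. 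The $\beta$-independent objects are the vertex projections $U_i(\mathbf{x})=x_i$, $1\le i\le t$: Theorem~\ref{beta} gives their $\beta$-independence on the space of $t$-walks, and since $\mathcal{G}'_m$ is $d$-regular its transition matrix $\bfa'=\bfa\bfb$ is doubly stochastic, so the uniform distribution on $V$ is stationary and each $U_i$ is uniform on $V$. Thus $U_1,\ldots,U_t$ are $\beta$-{i.i.d.}, and Theorem~\ref{mainthm}(ii) yields, for every $0<\varepsilon<1$, $\expec[Z]\le(\alpha+\beta\,\prob_U\{W>\varepsilon\})^t+t\varepsilon$ with $W=t^{-1}\sum_{i=1}^t W_i$ and $W_i=\expec[Z\,|\,U_i]$.

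Next I would build, for each $i\in\{1,\ldots,t\}$, a {p.p.t.}\ function $G^{(i)}$ inverting $F$ with $W_{G^{(i)}}\ge W_i$. On input $y\in V$, $G^{(i)}$ takes a forward $(t-i)$-walk out of $y$ with uniformly random out-edge choices, producing vertices $x_i=y,x_{i+1},\ldots,x_t$, the intermediate vertices $v_i,\ldots,v_{t-1}$, and hence the in-edge colours $\kappa'(x_{l-1},x_l)$ for $i<l\le t$ via the rotation function; it then picks the remaining $i$ in-edge colours $\kappa'(x_{l-1},x_l)$, $1\le l\le i$, uniformly at random, and queries $G'$ on the resulting string, which by bijectivity of $\rho$ equals $\rho(\mathbf{x}')$ for a unique $t$-walk $\mathbf{x}'$. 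On receiving $\tilde z$ it reconstructs $\varphi^{-1}(\tilde z)$ forward (using only evaluations of $F$ and the rotation function), obtaining the intermediates $\tilde v_0,\ldots,\tilde v_{t-1}$, and outputs $\tilde v_{i-1}$ after checking $F(\tilde v_{i-1})=y$. Two things must be verified: (a) $\rho(\mathbf{x}')$ has precisely the conditional distribution of $\rho(\mathbf{x})$ given $U_i(\mathbf{x})=y$ — this uses that $\rho$ is a bijection onto $V\times\{0,\ldots,d-1\}^t$, that $\kappa'$ gives the $d$ in-edges at every vertex distinct colours (so uniform colour choices are uniform in-edge choices), and that the prefix and suffix of a walk are conditionally independent given the vertex where they meet; and (b) whenever $G'$ actually inverts $F'$ at $\rho(\mathbf{x}')$ we get $\varphi^{-1}(\tilde z)=\mathbf{x}'$, whence $\tilde v_{i-1}=F^{-1}(x'_i)=F^{-1}(y)$. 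Given (a) and (b), $W_{G^{(i)}}(y)\ge\expec[Z\,|\,U_i=y]=W_i(y)$; letting $G$ pick $i\in\{1,\ldots,t\}$ uniformly and run $G^{(i)}$, $G$ is {p.p.t.}\ and $W_G=t^{-1}\sum_i W_{G^{(i)}}\ge W$ pointwise on $V$.

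Part (i) now follows. Proposition~\ref{wowprop}(ii) applied to $G$ (with $\delta$ significant and any significant $\varepsilon$) gives $\prob_U\{W>\varepsilon\}\le\prob_U\{W_G>\varepsilon\}\le 1-\delta/2$ a.a., hence $\alpha+\beta\,\prob_U\{W>\varepsilon\}\le 1-\beta\delta/2$ and $\expec[Z]\le(1-\beta\delta/2)^t+t\varepsilon$ a.a.; taking $\varepsilon=n^{-d}$ and letting $d\to\infty$ shows $\expec[Z]\le(1-\beta\delta/2)^t$ up to a negligible additive term, which is (i). For (ii), when $t\ge 7/\beta$ one has $(1-\beta\delta/2)^t\le(1-\beta\delta/2)^{7/\beta}\le e^{-7\delta/2}$, and a convexity comparison shows $e^{-7\delta/2}\le 1-2\delta$ for $0\le\delta\le 1/4$ while $e^{-7\delta/2}<1/2$ for $\delta>1/4$, so $\expec[Z]\le\max(1-2\delta,1/2)$ a.a.\ (the gap being significant, which absorbs the negligible slack). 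For (iii), $\delta\ge 1/2$ gives $(1-\beta\delta/2)^t\le(1-\beta/4)^t$, which for fixed $\beta$ and $t=\omega(\log n)$ is $n^{-\omega(1)}$, so $\expec[Z]\approx 0$; combining with Proposition~\ref{partialow}(ii) — legitimate because $N_m=2^{cm}$ and polynomially bounded $t$ force only polynomial gaps between consecutive input lengths — yields the stated claim at length $n$.

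The main obstacle is point (a): checking that planting $y$ at an interior index $i$ and filling in the rest of the walk by a forward sub-walk on the suffix together with uniform in-edge colours on the prefix exactly reproduces the law of $\rho(\mathbf{x})$ conditioned on $x_i=y$. Everything else — the identification $\expec[Z]=\prob^{n+te}\{F'\compose G'=I\}$, the $\beta$-{i.i.d.}\ bookkeeping through Theorems~\ref{beta} and~\ref{mainthm}, and the elementary estimates deriving (ii) and (iii) from (i) — is routine, the only mild annoyance being that Proposition~\ref{wowprop} forces a significant $\varepsilon$ while the error term $t\varepsilon$ must be made negligible, so (i) is really an inequality up to a negligible additive error.
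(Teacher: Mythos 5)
Your proposal is correct and follows essentially the same route as the paper: the same planted-walk reduction $G$ (random index $i$, forward sub-walk from $y$, random in-edge colours for the prefix, one query to $G'$ on the reverse representation), the same appeal to Theorem~\ref{beta} for $\beta$-i.i.d.\ projections, Theorem~\ref{mainthm}(ii), and Proposition~\ref{wowprop}(ii), with the same caveat that the $t\varepsilon$ slack must be absorbed. The only cosmetic difference is in part (ii), where you bound $(1-\beta\delta/2)^t$ by $e^{-7\delta/2}$ and compare directly with $\max(1-2\delta,1/2)$, while the paper majorizes the convex function $(1-\beta x)^t$ by a piecewise-linear function; both are routine and equivalent.
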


\begin{proof}
(i) Let $Z=\prob^{n+te}\{F'\compose G'=I|I\}$ so $\expec(Z)=\prob^{n+te}\{F'\compose G'=I\}$.
As in the proof of Theorem~\ref{weaktostrong}, we have
$\mathcal R$  taking $F$ to  $F'$ and must specify 
 $\mathcal R^*$ taking each   p.p.t.\ function $G'$, which attempts to invert $F'$, to another p.p.t.\ function $G$, which attempts to invert $F$.  On a given input $y$,  $\mathcal R^*$, querying oracles
for $G'$ and $F$,  computes $G$ as follows.

\begin{enumerate}
\item   $\mathcal R^*$ chooses a random $i$ in the interval $1\leq i< t$,
then generates a random sequence of integers $k_1,k_2,\ldots,k_{t-i}$,
where each $k_j$ is in the range $0\leq k_j< d$;
$(y,k_1,k_2,\ldots,k_{t-i})$ is the forward walk representation of 
a random $(t-i)$-walk  in ${\mathcal G}_m'$ with initial vertex~$y$.  
$\mathcal R^*$  applies
$F$ to obtain the reverse walk representation
$(y_t, k'_1,k'_2,\ldots,k'_{t-i})$ then generates
random integers $k'_{t-i+1},,k'_{t-i+2},\ldots,k'_t$ in the range $0\leq k'_j<d$
to obtain ${\mathbf y}=(y_t,k'_1,k'_2,\ldots,k'_t)$, the reverse walk representation
of a  $t$-walk $(y_0,y_1,\ldots,y_t)$ chosen randomly from $t$-walks such that $y_i=y$.

\item  $\mathcal R^*$ queries the $G'$ oracle on  $ {\mathbf y}$ and
receives an answer $${\mathbf x}=(x_0,k_1'',k_2'',\ldots,k_t'').$$

\item $\mathcal R^*$ applies $\varphi^{-1}$ to $\mathbf x$ to obtain
a purported walk $(x_0,x_1,\ldots,x_t)$, and then applies $\rho$.
If the result matches $\bf y$, $\mathcal R^*$ has verified that 
 $(x_0,x_1,\ldots,x_t)$ is indeed a $t$-walk and 
that $x_i=y$ and  $(x_{i-1},x_i)$ is an edge in ${\mathcal G}'_m$.
In this case,  there
is a $v$ such that $\{x_{i-1},v\}$ is an edge in ${\mathcal G}_m$ and
$F(v)=x_i$, so $\mathcal R^*$ returns $v=R(2^n,x_{i-1},k_i'')$;
otherwise, the function is undefined.
\end{enumerate}
When this procedure returns a value $v$, $F(v)=y$.  
\bigskip

Define $U_i({\mathbf y})=y_i$, where $\rho^{-1}({\bf y})=(y_0,y_1,\ldots,y_t)$.  
By Theorem~\ref{beta} and the bijectivity of $\rho$, the random objects $U_i$ are $\beta$-{i.i.d.}  The probability
that $G'$ successfully finds an inverse of ${\mathbf y}$, given that $y=y_i$, is $W_i(y)$, where
$W_i=\expec(Z|U_i)$.  Therefore, $\prob^n\{F\compose G=I|I\}$ is 
$W=(\sum W_i)/t$.
By Theorem~\ref{mainthm}(ii),  $$\expec(Z)\leq(\alpha+\beta\,\prob_I^n\{W>\varepsilon \})^t+t\varepsilon.$$
By Proposition~\ref{wowprop}, $\prob_I^n\{W>\varepsilon \}\leq 1-\delta/2$ a.a.
Thus, almost always
\begin{eqnarray*}
(\alpha+\beta\,\prob_I^n\{W>\varepsilon \})^t&\leq& (\alpha+\beta(1-\delta/2))^t\\
                    &=& (1-\beta\delta/2)^t.
\end{eqnarray*}
Hence, for every significant $\varepsilon$, $\expec(Z)\leq (1-\beta\delta/2)^t+t\varepsilon$,
which proves (i).

\noindent (ii) The function $f(x)=(1-\beta x)^t$ is convex for $x\geq 0$ and
\begin{eqnarray*}
%f\left(\frac{\ln 2}{\beta t}\right) &=& \left(1-\frac{\ln 2}{t} \right)^t\\
  f((\beta t)^{-1}) &=& (1-1 / t)^t\\  
                                 &<& e^{-1}
\end{eqnarray*}
so it lies below the continuous piecewise linear function 
$$
g(x)=\left\{
\begin{array}{ll}
 1-(1-e^{-1})\beta tx, &  \mbox{if $0\leq x\leq (\beta t)^{-1}$},\\
e^{-1}, &  \mbox{if $x> (\beta t)^{-1}$}.
 \end{array}
\right.
$$
In other words, $f(x)\leq g(x)=\max\left(1-(1-e^{-1})\beta t x,e^{-1}\right)$  (see Figure~1).
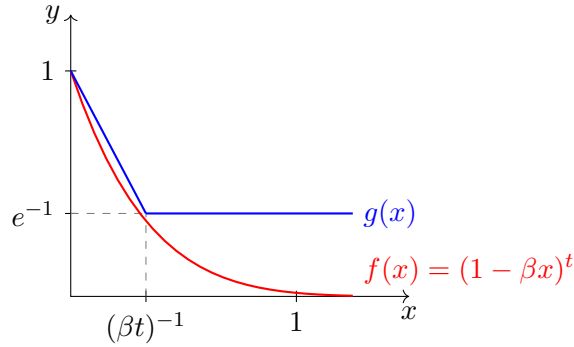
\begin{figure}[b]
\begin{center}
\begin{tikzpicture}[domain=0:1.25,xscale=3,yscale=3]
\draw[<->] (0,1.25) node[left]{$y$}-- (0,0) -- (1.5,0) node[below] {$x$};
\draw[-] (-.025,1)node[left]{$1$}--(.025,1);
\draw[-] (-.025,0.36788)node[left]{$e^{-1}$}--(.025,0.36788);
\draw[-] (1,-.025)node[below]{$1$}--(1,.025);
\draw[-] (0.33333,-.025)node[below]{$(\beta t)^{-1}$}--(0.33333,.025);
\draw[-,gray,dashed] (0,0.36788)--(0.33333,0.36788);
\draw[-,gray,dashed](0.33333,0)--(0.33333,0.36788);
\draw[red,thick] plot (\x, {(1-0.5*\x)^6}) node[above right] {$f(x)
=(1-\beta x)^t
$};
\draw[blue,thick,domain=0:0.33333] plot (\x, {1.0-1.89636*\x}) ;
\draw[blue,thick,domain=0.33333:1.25]
     plot (\x, {00.36788}) node[right] {$g(x)
%     =\left\{
%\begin{array}{ll}
% 1-\frac{\beta t}{2\ln 2}x, &  \mbox{$0\leq x\leq \frac{\ln 2}{\beta t}$},\\
% \frac{1}{2}, &  \mbox{$x\geq \frac{1}{\beta t} $},
% \end{array}
%\right.
$};
\end{tikzpicture}
\end{center}
\caption{Function $f(x)$ lies below $g(x$).}
\end{figure}
Thus, setting $x=\delta/2$ gives
$$
  (1-\beta\delta/2)^t \leq \max\left(1-(1-e^{-1})\beta t \delta/2,e^{-1}\right).
$$
But  $\beta t\geq 7$ and $(1-e^{-1})/2=0.31606\cdots$, so from part
(i) of the theorem
$$
   \prob^{n+te}\{F'\compose G'=I\} \leq \max(1-2\delta, 1/2)\ \mbox{a.a.}
$$

\noindent (iii) This also follows from part (i).  Since $\delta\geq 1/2$\ {a.a.},
 $1-\beta\delta/2\leq 1-\beta/4$\ a.a.  We know $t=\omega(\log n)$\
 a.a., so $(1-\beta\delta/2)^t$ is negligible. 
\end{proof}

To conclude, we show that the  reduction of Goldreich \etal\ \cite{goldreich:1990}
from a weakly one-way permutation to a strongly  one-way permutation  is security preserving.  

First, consider the reduction $(\mathcal{R}_0,\mathcal{R}_0^*)$  described in the proof of Lemma~\ref{securelem}(ii)
where $t\geq 1/\beta$ is an even integer.  (Recall that   $n=2m$ and 
$e=3$, so an even $t$  ensures that $n+te$ is  even.)  If $F$ is a weakly one-way
permutation, where there is a significant $\delta$ such that 
for all p.p.t.\ $G$ $\prob^n\{F\compose G=I\}\ \leq 1-\delta\ \mathrm{a.a.}$, then
applying $\mathcal{R}_0$ $s$ times to $F$ results in a
permutation $F'$ such that
for any p.p.t.\ $G'$ attempting to invert $F'$
$$
   \prob^{n+rte}\{F'\compose G'=I\} \leq \max(1-2^s\delta, 1/2)\ \mathrm{a.a}
$$ 
We know that for some $c>0$, $\delta(n)<n^{-c}$ a.a., so, setting $s=\lceil c\log n\rceil$, we have that
for every p.p.t.\ $G'$, $ \prob^{n}\{F'\compose G'=I\} \leq 1/2\ \mathrm{a.a}$.

Next, consider the reduction $(\mathcal{R}_1,\mathcal{R}_1^*)$  described in the proof of Lemma~\ref{securelem}(iii) where $t=\omega(n)$.  Applying
$\mathcal{R}_1$ to $F'$ results in a permutation $F''$ such that for
all p.p.t.\ $G''$ attempting to invert $F''$,
$$
\prob^n\{F''\compose G''=I\}\approx  0.
$$
Thus, a transformation  $\mathcal{R}$ consisting of $s$ applications 
of  $\mathcal{R}_0$ followed by an
application of $\mathcal{R}_1$ takes $F$ to $F''$, thereby
increasing input length from $n$ to $n+\omega(\log n)$.  
Transformation $\mathcal{R}^*$ consisting of an application
of $\mathcal{R}_1^*$ followed by $s$ applications of $\mathcal{R}_0^*$
takes p.p.t.\ function $G''$   to  p.p.t.\ function $G$,
thereby decreasing input length from $n+\omega(\log n)$ to $n$.
$\mathcal{R}^*$, computing $G$,  queries the oracle for $G''$ just once and 
the probability that $G$ inverts $F$ is precisely the probability that
$G''$ inverts $F''$.  $\mathcal{R}^*$ runs in polynomial time (assuming
constant time to answer a query).  Thus, we have the following result.

 \begin{theorem}
 \label{last}
 For reduction $(\mathcal{R},\mathcal{R}^*)$ described above,  if
 the security of $F''$ against $G''$ is $S''(n)$, then the security of
 $F$ against $G$ is $S(n)\asymp S''(n+\omega(\log n))$.  
 Thus, $S(n)\asymp S''(n)$.
 That is, $(\mathcal{R},\mathcal{R}^*)$ is a security preserving
 reduction taking weakly to strongly one-way functions.
 \end{theorem}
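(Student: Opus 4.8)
The plan is to read the quantitative relation straight off the three facts just established. First, $\mathcal{R}$ carries a permutation on $\{0,1\}^n$ to a permutation $F''$ on $\{0,1\}^{n+g(n)}$ where $g(n)=\omega(\log n)$ and, moreover, $g(n)=o(n)$: the $s=O(\log n)$ applications of $\mathcal{R}_0$ add only $O(\log n)$ to the input length (each uses a fixed walk length $t$), while for $\mathcal{R}_1$ we are free to take $t$ just barely super-logarithmic, say $t=\lceil(\log n)^2\rceil$. Second, $\mathcal{R}^*$, given oracle access to a p.p.t.\ inverter $G''$ for $F''$, computes a p.p.t.\ inverter $G$ for $F$ that makes exactly one oracle call, namely to $G''$ on an input of length $n+g(n)$. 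Third, the probability that $G$ inverts $F$ on length-$n$ inputs equals the probability that $G''$ inverts $F''$ on length-$(n+g(n))$ inputs. Writing $T,\varepsilon,S$ for the running time, success probability, and security of $G$ against $F$, and $T'',\varepsilon'',S''$ for those of $G''$ against $F''$, the last two facts give $\varepsilon(n)=\varepsilon''(n+g(n))$ and $T(n)=T''(n+g(n))+q(n)$, where $q(n)$ accounts for the single call to $G''$ together with all the internal work of $\mathcal{R}^*$.

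The one point that needs care is that $q$ is genuinely polynomial, even though $\mathcal{R}^*$ is a composition of $s=O(\log n)$ copies of $\mathcal{R}_0^*$ with one copy of $\mathcal{R}_1^*$. Apart from its one oracle call, each of these maps only evaluates the relevant intermediate permutation at $O(t)$ points, through the rotation function and $F$; and because each application of $\mathcal{R}_0$ uses a \emph{fixed} number $t$ of evaluations of the previous permutation to compute one value, evaluating the $j$-th intermediate permutation costs at most $c^{j}$ evaluations of $F$ for an absolute constant $c$. Since $j\leq s=O(\log n)$, this is $c^{O(\log n)}=n^{O(1)}$; summing over the $O(\log n)$ layers of $\mathcal{R}_0^*$ and the single $\mathcal{R}_1^*$ layer (whose $O(t)=O((\log n)^2)$ permutation evaluations each cost this much) keeps the total polynomial.

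To finish, combine. Since $T''(m)\geq 1$ and $\varepsilon''(m)\leq 1$ we have $1/\varepsilon''(m)\leq T''(m)/\varepsilon''(m)=S''(m)$, so with $m=m(n)=n+g(n)$,
$$
  S(n)=\frac{T''(m)+q(n)}{\varepsilon''(m)}=S''(m)+\frac{q(n)}{\varepsilon''(m)}\leq S''(m)+q(n)\,S''(m)=(1+q(n))\,S''(m),
$$
while trivially $S(n)\geq S''(m)$. Thus $S''(n+g(n))\leq S(n)\leq(1+q(n))\,S''(n+g(n))$ with $q$ polynomial, which already gives $S(n)\asymp S''(n+g(n))=S''(n+\omega(\log n))$: in the definition of $\succeq$ take $c=1$ in both directions and absorb the factor $1+q(n)$ into the $n^{-k}$ slack. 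Finally, since $g(n)=\omega(\log n)$ but $g(n)=o(n)$, for large $n$ we have $n\leq n+g(n)\leq 2n$; taking $S''$ non-decreasing --- which we may, this being the role of the paper's standing ``reasonable growth'' assumption on security functions --- these inequalities give $S''(n)\leq S''(n+g(n))\leq S''(2n)$, hence $S''(n+\omega(\log n))\asymp S''(n)$ by the definition of $\succeq$ (with $c=1$ and $c=2$ respectively). Composing the two order-equivalences yields $S(n)\asymp S''(n)$; together with the already-established fact that $F''$ is strongly one-way, this is exactly the statement that $(\mathcal{R},\mathcal{R}^*)$ is a security-preserving reduction from weakly to strongly one-way permutations.

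I expect the main obstacle to be the running-time accounting above: verifying that the logarithmically-deep composition defining $\mathcal{R}^*$ does not inflate the running time super-polynomially. This hinges on $\mathcal{R}_0$ using a \emph{constant} walk length, so the cost of the $j$-th intermediate permutation is merely $c^{j}$ with $c$ constant and $j=O(\log n)$. A lesser, routine point is the shift-invariance $S''(n+\omega(\log n))\asymp S''(n)$, for which the standing growth assumption on security functions is invoked.
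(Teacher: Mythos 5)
Your proposal is correct and follows essentially the same route as the paper, which likewise derives the theorem directly from the three facts established just before it (one oracle call to $G''$, equal inversion probabilities, and input length growing from $n$ to $n+\omega(\log n)$). You supply two details the paper only asserts --- that the logarithmically-deep composition defining $\mathcal{R}^*$ still runs in polynomial time, and the shift-invariance $S''(n+\omega(\log n))\asymp S''(n)$ under the standing growth assumption --- but these are elaborations of the same argument, not a different one.
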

 
 \begin{remark}
 The  reduction  of Theorem~\ref{last}  takes every
 weakly one-way permutation to a strongly one-way permutation.
 In contrast, the proof of Theorem~\ref{weaktostrong} shows that for every weakly one-way function there is a reduction to a strongly one-way function. It 
  is not apparent from this proof that there is  just one reduction that takes
 every weakly one-way function to a strongly one-way function.  Thus,
 the reduction of Theorem~\ref{last} is   security preserving
 but is also  uniform in this sense.
\end{remark}

\Section{Conclusion.}
\label{conclusion}

 One-way functions are not the only cryptographic primitives proved secure using repetition
 or expander graph constructions
 to amplify hardness. Others include
collision-resistant hash functions \cite{canetti;sudan;trevisan;vadhan;wee:2007}, encryption schemes \cite{dwork;naor;reingold:2004}, weakly
verifiable puzzles \cite{canetti;halevi;steiner:2005,impagliazzo;jaiswal;kabanets:2009,jutla:2010}, signature schemes and
message authentication codes \cite{dodis;impagliazzo;jaiswal;kabanets:2009}, commitment schemes \cite{halevi;rabin:2008,chung;liu;lu;yang:2010},
pseudorandom functions and pseudorandom generators \cite{dodis;impagliazzo;jaiswal;kabanets:2009,maurer;tessaro:2010},
block ciphers \cite{luby;rackoff:1986,naor;reingold:1997,maurer;tessaro:2009,tessaro:2011}, and interactive protocols
\cite{pass;venkita:2012,hastad;pass;wikstram;pietrzak:2010,haitner:2013}.  
Another approach to constructing security preserving reductions  
 uses  hash functions rather than expander graphs to control  input size blowup 
 \cite{crescenzo:99,haitner;harnik;reingold:2006,bogdonov;rosen:2013}.  Conditional
 expectation bounds may help simplify proofs in some of these cases and point
 the way to  other cryptographic applications.

\end{document}